 \tikzstyle{br} = [decorate, ultra thick, decoration = {calligraphic brace}]
\newcommand{\dcut}{{\sc $d$-Cut}}
\newcommand{\mc}{{\sc Matching Cut}}
\newcommand{\NP}{{\sf NP}}
\newcommand{\dist}{{\sf dist}}
\newcommand{\radius}{{\sf radius}}
\newcommand{\diam}{{\sf diameter}}
\newcommand{\ssi}{\subseteq_i}
\newcommand{\si}{\supseteq_i}
\newtheorem{observation}[theorem]{Observation}
\newtheorem{nclaim}{Claim}
\newenvironment{claimproof}[1]{\par\noindent{\textit{Proof of the Claim.}}
\enspace#1}{\hfill \textcolor{gray}{$\vartriangleleft$}}
\definecolor{nicered}{RGB}{204,0,0}
\definecolor{lightblue}{RGB}{153,204,255}
\tikzstyle{vertex}=[thin,circle,inner sep=0.cm, minimum size=1.7mm, fill=black, draw=black]
 \tikzstyle{svertex}=[thin,circle,inner sep=0.cm, minimum size=1.3mm, fill=black, draw=black]
 \tikzstyle{bvertex}=[thin,circle,inner sep=0.cm, minimum size=1.7mm, fill=lightblue, draw=lightblue]
 \tikzstyle{rvertex}=[thin,circle,inner sep=0.cm, minimum size=1.7mm, fill=nicered,draw=nicered]
 \tikzstyle{evertex}=[thin,circle,inner sep=0.cm, minimum size=1.7mm, fill=none,draw=black]
 \tikzstyle{edge}=[thick, draw = gray]
 \tikzstyle{tedge}=[ultra thick, draw = black]
 \tikzstyle{tredge}=[ultra thick, draw = nicered]
 \tikzstyle{tbedge}=[ultra thick, draw=lightblue]
 \tikzstyle{redge}=[thick, draw = nicered]
 \tikzstyle{bedge}=[thick, draw = lightblue] 
 \tikzstyle{gedge}=[thick, draw = nicegreen] 
 \tikzstyle{brace} = [decorate, ultra thick, decoration = {calligraphic brace}]
\begin{document}

\title{Finding $d$-Cuts in Graphs of Bounded Diameter, Graphs of Bounded Radius and $H$-Free Graphs\thanks{An extended abstract of this paper has appeared in the proceedings of WG 2024~\cite{LMPS24}.}}

\titlerunning{Finding $d$-Cuts in Graphs}

\author{Felicia Lucke\inst{1}\orcidID{0000-0002-9860-2928} \and Ali Momeni\inst{2}\orcidID{0009-0009-8280-7847}\and Dani\"el Paulusma\inst{3}\orcidID{0000-0001-5945-9287}  \and Siani Smith\inst{4}\orcidID{0000-0003-0797-0512}}

\authorrunning{F. Lucke, A. Momeni, D. Paulusma, S. Smith}

\institute{LIP, ENS Lyon, Lyon, France, \email{felicia.lucke@ens-lyon.fr}
\and
Faculty of Computer Science, UniVie Doctoral School Computer Science DoCS, University of Vienna, Austria\\ 
\email{ali.momeni@univie.ac.at}
\and
Department of Computer Science, Durham University, Durham, UK \email{daniel.paulusma@durham.ac.uk}
\and  
Department of Computer Science, Loughborough University, Loughborough, UK \email{s.smith16@lboro.ac.uk}}

\maketitle             

\begin{abstract}
The \dcut{} problem is to decide whether a graph has an edge cut such that each vertex has at most $d$ neighbours on the opposite side of the cut. If $d=1$, we obtain the intensively studied {\sc Matching Cut} problem. The {\sc $d$-Cut} problem has been studied as well, but a systematic study for special graph classes was lacking. We initiate such a study and consider classes of bounded diameter, bounded radius and $H$-free graphs. We prove that for all $d\geq 2$, \dcut{} is polynomial-time solvable for graphs of diameter~$2$, $(P_3+P_4)$-free graphs and $P_5$-free graphs. These results extend known results for $d=1$. However, we also prove several \NP-hardness results for \dcut{} that contrast known polynomial-time results for $d=1$. Our results 
lead to full dichotomies for bounded diameter and bounded radius and to 
almost-complete dichotomies for $H$-free graphs.
\keywords{matching cut  \and $d$-cut \and diameter \and radius \and $H$-free graph}
\end{abstract}

\section{Introduction}\label{s-intro}

We consider the generalization \dcut{} of a classic graph problem {\sc Matching Cut} ($1$-{\sc Cut}). First, we explain the original graph problem. Consider a connected graph $G=(V,E)$, and let $M\subseteq E$ be a subset of edges of $G$. The set $M$ is an {\it edge cut} of $G$ if it is possible to partition $V$ into two non-empty sets $B$ (set of {\it blue} vertices) and $R$ (set of {\it red} vertices) in such a way that $M$ is the set of all edges with one end-vertex in $B$ and the other one in~$R$.
 Now, suppose that $M$ is in addition also a {\it matching}, that is, no two edges in $M$ have a common end-vertex. Then $M$ is said to be a {\it matching cut}. See Figure~\ref{f-examplesdcut} for an example.

\begin{figure}[t]
\centering
\scalebox{1}{
\begin{tikzpicture}

\begin{scope}[scale = 0.8]
\node[bvertex] (v1) at (0,2){};
\node[bvertex] (v2) at (0,1){};
\node[bvertex] (v3) at (1,2){};
\node[bvertex] (v4) at (1,1){};
\node[bvertex] (v5) at (1,0){};
\node[rvertex] (u1) at (2,2){};
\node[rvertex] (u2) at (2,1){};
\node[rvertex] (u3) at (2,0){};
\node[rvertex] (u4) at (3,2){};
\node[rvertex] (u5) at (3,1){};
\node[rvertex] (u6) at (3,0){};

\draw[edge](v1) -- (v2);
\draw[edge](v1) -- (v3);
\draw[edge](v2) -- (v3);
\draw[edge](v2) -- (v4);
\draw[edge](v4) -- (v5);

\draw[tedge](v4) -- (u2);
\draw[tedge](v5) -- (u3);

\draw[edge](u1) -- (u2);
\draw[edge](u1) -- (u4);
\draw[edge](u1) -- (u5);
\draw[edge](u1) -- (u6);
\draw[edge](u2) -- (u4);
\draw[edge](u3) -- (u5);
\draw[edge](u3) -- (u6);

\end{scope}

\begin{scope}[shift = {(4,0)}, scale = 0.8]
\node[bvertex] (v1) at (0,2){};
\node[bvertex] (v2) at (0,1){};
\node[bvertex] (v3) at (1,2){};
\node[bvertex] (v4) at (1,1){};
\node[bvertex] (v5) at (1,0){};
\node[rvertex] (u1) at (2,2){};
\node[rvertex] (u2) at (2,1){};
\node[rvertex] (u3) at (2,0){};
\node[rvertex] (u4) at (3,2){};
\node[rvertex] (u5) at (3,1){};
\node[rvertex] (u6) at (3,0){};

\draw[edge] (v1) -- (v2);
\draw[edge] (v1) -- (v3);
\draw[edge] (v1) -- (v4);
\draw[edge] (v1) -- (v5);
\draw[edge] (v2) -- (v3);
\draw[edge] (v2) -- (v4);
\draw[edge] (v2) -- (v5);
\draw[edge] (v3) -- (v4);
\draw[edge] (v3) to [bend right = 25] (v5);
\draw[edge] (v4) -- (v5);

\draw[tedge] (v3) -- (u1);
\draw[tedge] (v4) -- (u1);
\draw[tedge] (v4) -- (u2);
\draw[tedge] (v4) -- (u3);

\draw[edge] (u1) -- (u2);
\draw[edge] (u1) -- (u4);
\draw[edge] (u1) -- (u6);
\draw[edge] (u2) -- (u3);
\draw[edge] (u2) -- (u4);
\draw[edge] (u2) -- (u5);
\draw[edge] (u2) -- (u6);
\draw[edge] (u3) -- (u4);
\draw[edge] (u3) -- (u5);
\draw[edge] (u3) -- (u6);
\draw[edge] (u4) -- (u5);
\draw[edge] (u5) -- (u6);

\end{scope}

\begin{scope}[shift = {(8,0)},scale=0.8]
	\node[vertex] (v1) at (0,2){};
	\node[vertex, label=left:$u$] (v2) at (0,1){};
	\node[vertex] (v3) at (0,0){};
	\node[vertex] (v4) at (3,2){};
	\node[vertex, label=right:$v$] (v5) at (3,1){};
	\node[vertex] (v6) at (3,0){};
	\node[vertex] (u1) at (1,1){};
	\node[vertex] (u2) at (2,1){};
	\draw[edge](v1)--(v2);
	\draw[edge](v2)--(v3);
	\draw[edge](v4)--(v5);
	\draw[edge](v5)--(v6);
	\draw[edge](v2)--(u1);
	\draw[edge](u2)--(v5);
	\draw[edge](u1)--(1.3,1);
	\node[color = gray](dots) at (1.65,1){$\dots$};	
	\draw[br](2.925,0.875)--(0.075,0.875);
	\scriptsize
	\node[](i) at (1.5, 0.6){$i$ edges};
\end{scope}
\end{tikzpicture}
}
\caption{Left: a graph with a matching cut (i.e., a $1$-cut). Middle: a graph with a $3$-cut but no $d$-cut for $d\leq 2$. Right: the graph $H_i^*$.}\label{f-examplesdcut}
\vspace*{-0.2cm}
\end{figure}

Graphs with matching cuts were introduced in the context of number theory~\cite{Gr70} 
and have various other applications~\cite{ACGH12,FP82,GPS12,PP01}.
 The {\sc Matching Cut} problem is to decide if a connected graph has a matching cut. This problem was shown to be \NP-complete by Chv\'atal~\cite{Ch84}.
Several variants and generalizations of matching cuts are known. In particular, a {\it perfect matching cut} is a matching cut that is a perfect matching, whereas a {\it disconnected perfect matching} is a perfect matching containing a matching cut. The corresponding decision problems  {\sc Perfect Matching Cut}~\cite{HT98} and {\sc Disconnected Perfect Matching}~\cite{BP}
are also \NP-complete; see~\cite{BCD24,FLPR25,LL23,LT22,LPR23a} for more complexity results for these two problems. The optimization versions {\sc Maximum Matching Cut} and {\sc Minimum Matching Cut} are to find a matching cut of maximum and minimum size in a connected graph, respectively; see~\cite{LLPR24,LPR24} for more details.

\medskip
\noindent
{\bf Our Focus.}
Matching cuts have also been generalized as follows.
For an integer~$d\geq 1$ and a connected graph $G=(V,E)$, a set $M\subseteq E$ is a {\it $d$-cut} of $G$ if it is possible to partition $V$ into two non-empty sets $B$ and $R$, such that: (i) the set~$M$ is the set of all edges with one end-vertex in $B$ and the other one in $R$; and (ii) every vertex in $B$ has at most $d$ neighbours in $R$, and vice versa  (see also Figure~\ref{f-examplesdcut}). Note that a $1$-cut is a matching cut. 
We consider the {\sc $d$-Cut} problem: does a connected graph have a $d$-cut? Here, $d\geq 1$ is a fixed integer, so not part of the input.  Note that $1$-{\sc Cut} is {\sc Matching Cut}. The {\sc $d$-Cut} problem was introduced by Gomes and Sau~\cite{GS21} who proved its \NP-completeness for all~$d\geq 1$.

\medskip
\noindent
{\bf Our Goal.}
To get a better understanding of the hardness of an \NP-complete graph problem, it is natural to restrict the input to belong to some special graph classes. We will first give a brief survey of the known complexity results for {\sc Matching Cut} and {\sc $d$-Cut} for $d\geq 2$ under input restrictions. As we will see, for {\sc Matching Cut} many more results are known than for {\sc $d$-Cut} with $d\geq 2$.
Our goal is to obtain the same level of understanding of the {\sc $d$-Cut} problem for $d\geq 2$. This requires a currently lacking systematic study into the complexity of this problem. We therefore consider the following research question:

\medskip
\noindent
{\it For which graph classes~${\cal G}$ does the complexity of {\sc $d$-Cut}, restricted to graphs from ${\cal G}$, change if $d\geq 2$ instead of $d=1$?}

\medskip
\noindent
As {\it testbeds} we take classes of graphs of bounded diameter, graphs of bounded radius, and $H$-free graphs. 
The {\it distance} between two vertices $u$ and $v$ in a connected graph~$G$ is the {\it length} (number of edges) of a shortest path between $u$ and $v$ in~$G$. 
The {\it eccentricity} of a vertex $u$ is the maximum distance between $u$ and any other vertex of $G$. The {\it diameter} of $G$ is the maximum eccentricity over all vertices of~$G$, whereas the {\it radius} of $G$ is the minimum eccentricity over all vertices of $G$. A graph~$G$ is {\it $H$-free} if $G$ does not contain a graph $H$ as an {\it induced subgraph}, that is, $G$ cannot be modified into $H$ by vertex deletions. 

\medskip
\noindent
{\bf Existing Results.}
We focus on classical complexity results; see~\cite{AS21,GS21} for exact and parameterized complexity results for \dcut.
Let $K_{1,r}$ denote the $(r{+}1)$-vertex star, which has vertex set $\{u,v_1,\ldots,v_r\}$ and edges $uv_i$ for $i\in \{1,\ldots,r\}$.
 Chv\'atal~\cite{Ch84} showed that {\sc Matching Cut} is \NP-complete even for $K_{1,4}$-free graphs of maximum degree~$4$, but 
 polynomial-time solvable for graphs of maximum degree at most~$3$. Gomes and Sau~\cite{GS21} extended these results by proving that for every $d\geq 2$,  \dcut{} is \NP-complete for
 $(2d+2)$-regular graphs, 
 but 
 polynomial-time solvable for graphs of maximum degree at most $d+2$. 
Feghali et al.~\cite{FLPR25} 
proved
that for every $d\geq 1$ and every $g \geq 3$, there is a function~$f(d)$,
 such that {\sc $d$-Cut} is \NP-complete for bipartite graphs of girth at least~$g$ and maximum degree at most $f(d)$.
 The {\it girth} of a graph~$G$ that is not a forest is the length of a shortest induced cycle in $G$. 
It is also known that {\sc Matching Cut} is polynomial-time solvable for graphs of diameter at most~$2$~\cite{BJ08,LL19}, and even radius at most~$2$~\cite{LPR22}, while being \NP-complete for graphs of diameter~$3$~\cite{LL19}, and thus radius at most~$3$ 
(for a survey of other results, see~\cite{CHLLP21}). 
 Hence, we obtain:

\begin{theorem}[\cite{LL19,LPR22}]\label{t-diameter}
For $r\geq 1$, {\sc Matching Cut} is polynomial-time solvable for graphs of diameter~$r$ and graphs of radius~$r$ if $r\leq 2$ and \NP-complete if $r\geq 3$.
\end{theorem}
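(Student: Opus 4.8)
The statement assembles results that are essentially already in the literature, so the plan is to explain how the four regimes fit together and what the underlying ideas are, rather than to reprove everything from scratch.

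\smallskip
\noindent\emph{Polynomial cases ($r\le 2$).} Every graph of diameter~$r$ has radius at most~$r$, so for $r\le 2$ it suffices to solve \mc{} on graphs of radius at most~$2$; this is exactly the algorithm of~\cite{LPR22}, which in particular also reproves the diameter-$2$ result of~\cite{BJ08,LL19}. The idea I would use is as follows. Fix a vertex~$c$ of eccentricity at most~$2$ and assume, by symmetry, that $c$ lies on the blue side~$B$ of a putative matching cut $(B,R)$. Since $c$ has at most one neighbour in~$R$, branch over this neighbour~$x$ (or over its absence): this gives $O(n)$ branches, and in each branch $N(c)\setminus\{x\}$ is forced into~$B$. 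Every remaining vertex is at distance exactly~$2$ from~$c$ and hence has a neighbour in $N(c)$; any vertex with two neighbours already known to be blue must itself be blue, and iterating this rule and its red analogue leaves an instance that can be resolved in polynomial time, for instance by a reduction to \textsc{$2$-Satisfiability}. Running this over all branches and over the two choices for the side of~$c$ yields the claimed algorithm.

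\smallskip
\noindent\emph{Hardness cases ($r\ge 3$).} The starting point is the \NP-completeness of \mc{} for graphs of diameter~$3$~\cite{LL19}, which is proved by a reduction from a suitable variant of \nae{} in which variables and clauses are encoded by bounded-size gadgets, all attached to one or two ``hub'' vertices so that any two vertices lie within distance~$3$. Two things then remain. First, one checks from the construction that these instances have radius exactly~$3$ (their radius is at most their diameter, which is~$3$, and a direct inspection rules out radius at most~$2$), so the diameter-$3$ hardness already settles the radius-$3$ case. Second, to obtain diameter --- and simultaneously radius --- exactly~$r$ for any fixed $r\ge 4$, I would attach to a suitable vertex of the construction a chain of $r-3$ copies of~$K_4$ glued consecutively along single vertices. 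Since $K_4$ has no matching cut and every copy of $K_4$ must lie entirely on one side of any matching cut, such a chain neither has a matching cut nor creates one in the whole graph; on the other hand it lengthens the relevant eccentricities by exactly $r-3$, so choosing the gluing vertex at the right distance from the hub makes both the diameter and the radius equal to~$r$.

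\smallskip
\noindent\emph{Main obstacle.} The delicate part is the hardness side. Keeping the diameter at~$3$ while simulating \nae{} forces every gadget to be ``local'' --- each variable- and clause-gadget must sit within distance~$3$ of everything else --- and at the same time the gadgets must neither admit a matching cut of their own nor let the intended forcing be circumvented; reconciling these two demands is exactly what makes the diameter-$3$ reduction of~\cite{LL19} non-trivial. For the extension to larger~$r$, the subtlety is that the naive ways of inflating distances immediately create a matching cut (a pendant edge \emph{is} a matching cut, and an attached cycle can be cut at two edges), which is why the padding must be built from matching-cut-immune blocks such as~$K_4$, and why one has to verify that the resulting diameter and radius are \emph{exactly}~$r$ rather than merely at least~$r$.
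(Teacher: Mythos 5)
The paper itself gives no proof of Theorem~\ref{t-diameter}: it is imported wholesale from the literature (polynomial time for diameter~$2$~\cite{BJ08,LL19} and radius~$2$~\cite{LPR22}, \NP-completeness for every fixed diameter and radius at least~$3$~\cite{LL19,LPR22}). Your handling of the polynomial cases and of $r=3$ is consistent with that assembly: diameter~$2$ implies radius at most~$2$, and hardness for radius~$3$ does follow from hardness for diameter~$3$ together with the radius-$2$ algorithm (a diameter-$3$ graph has radius $2$ or $3$, so a polynomial algorithm for radius-$3$ graphs would decide all diameter-$3$ graphs). Your sketch of the radius-$2$ algorithm is much rougher than the actual algorithm of~\cite{LPR22}, but since the theorem is a citation this is not the real issue.

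The genuine gap is your padding for $r\geq 4$. Attaching a single chain of $r-3$ copies of $K_4$, glued consecutively along cut vertices, to a vertex $v$ of eccentricity~$3$ in the diameter-$3$ hard instance $G$ does preserve the existence of a matching cut (each $K_4$ is monochromatic and consecutive copies share a vertex) and does make the diameter exactly~$r$; but it does not make the radius~$r$. The attachment vertex $v$ itself witnesses this: its eccentricity in the padded graph is $\max\{\mathrm{ecc}_G(v),\, r-3\}\leq \max\{3,\,r-3\}<r$ for every $r\geq 4$ (for $r=4$ the padded graph still has radius~$3$), so the radius of the padded graph stays far below~$r$. A single pendant appendage can never push the \emph{minimum} eccentricity up to~$r$, because the gluing vertex remains within distance~$3$ of all of $G$ and within distance $r-3$ of the end of the chain; so your claim that one construction yields diameter and radius simultaneously equal to~$r$ is false, and the radius statement for $r\geq 4$ is left unproved. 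A repair is to treat diameter and radius by separate paddings, e.g.\ keep your single chain for diameter exactly~$r$, and for radius attach such a $K_4$-chain of length $r-3$ to \emph{every} vertex of a hard instance in which every vertex has eccentricity exactly~$3$ (this forces every eccentricity up to at least~$r$, at the harmless price of diameter $2r-3$), verifying that the base instances have this property --- or simply cite the constructions of~\cite{LL19,LPR22}, which is what the paper does.
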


\noindent
To study a problem in a systematic way on graph classes that can be characterized by forbidden induced subgraphs, an often used approach is to first focus on the classes of $H$-free graphs.
As {\sc Matching Cut} is \NP-complete for graphs of girth~$g$ for every $g\geq 3$~\cite{FLPR25} and for $K_{1,4}$-free graphs~\cite{Ch84}, {\sc Matching Cut} is \NP-complete for $H$-free graphs whenever $H$ has a cycle or is a forest with a vertex of degree at least~$4$.
What about when $H$ is a forest of maximum degree~$3$?

We let $P_t$ be the path on $t$ vertices. We denote the disjoint union of two vertex-disjoint graphs $G_1+G_2$ by $G_1+G_2=(V(G_1)\cup V(G_2),E(G_1)\cup E(G_2))$. We let $sG$ be the disjoint union of $s$ copies of $G$.
Feghali~\cite{Fe23} proved the existence of an integer~$t$ such that {\sc Matching Cut} is \NP-complete for $P_t$-free graphs, which was narrowed down to  $(3P_5,P_{15})$-free graphs in~\cite{LPR23a} and to  $(3P_6,2P_7,P_{14})$-free graphs in~\cite{LL23}. Let $H^*_1$ be the ``H''-graph, which has vertices $u,v,w_1,w_2,x_1,x_2$ and edges $uv, uw_1,uw_2,vx_1,vx_2$.
For $i\geq 2$, let $H_i^*$ be the graph obtained from $H_1^*$ by subdividing $uv$ exactly $i-1$ times; see  Figure~\ref{f-examplesdcut}. It is known that {\sc Matching Cut} is \NP-complete for $(H_1^*,H_3^*,H_5^*,\ldots)$-free bipartite graphs~\cite{Mo89} and for $(H_1^*,\ldots,H_i^*)$-free  graphs for every $i\geq 1$~\cite{FLPR25}.

On the positive side, {\sc Matching Cut} is polynomial-time solvable  for {\it claw-free} graphs ($K_{1,3}$-free graphs) and 
for $P_6$-free graphs~\cite{LPR23a}.
Moreover, if {\sc Matching Cut} is polynomial-time solvable for $H$-free graphs for some graph~$H$, then it is so for $(H+P_3)$-free graphs~\cite{LPR23a}. 

For two graphs $H$ and $H'$, we write $H\ssi H'$ if $H$ is an induced subgraph of~$H'$. Combining the above yields a partial classification (see also~\cite{FLPR25,LPR24}):

\begin{theorem}[\cite{Bo09,Ch84,FLPR25,LL23,LPR22,LPR23a,Mo89}]\label{t-s0}
For a graph~$H$, {\sc Matching Cut} on $H$-free graphs is 
\begin{itemize}
\item polynomial-time solvable if $H\ssi sP_3+K_{1,3}$ or $sP_3+P_6$ for some $s\geq 0$;
\item \NP-complete if $H\si K_{1,4}$, $P_{14}$, $2P_7$, $3P_5$, $C_r$ for some $r\geq 3$, or $H_i^*$ for some $i\geq 1$.
\end{itemize}
\end{theorem}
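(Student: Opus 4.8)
The plan is to assemble both bullets from the results already surveyed in the text, gluing them together by two elementary monotonicity remarks and one short induction; no new algorithm or reduction is required. The key observation is that $H\ssi H'$ implies every $H$-free graph is also $H'$-free, so polynomial-time solvability of \mc{} on $H'$-free graphs transfers down to $H$-free graphs, while \NP-completeness of \mc{} on $H'$-free graphs transfers up to every class of $H''$-free graphs with $H'\ssi H''$. Because of this, for the first bullet it suffices to handle $H=sP_3+K_{1,3}$ and $H=sP_3+P_6$ for each $s\geq 0$, and for the second bullet it suffices to handle the ``smallest'' forbidden graphs $K_{1,4},P_{14},2P_7,3P_5,H_i^*$ together with the case that $H$ has a cycle.

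For the polynomial cases, the base cases $s=0$ are exactly the claw-free result~\cite{Bo09} and the $P_6$-free result~\cite{LPR23a}. For the step from $s$ to $s+1$ I would invoke the closure property of~\cite{LPR23a}, namely that polynomial-time solvability of \mc{} on $H$-free graphs implies polynomial-time solvability on $(H+P_3)$-free graphs. Applying this $s$ times to each of the two base cases settles $(sP_3+K_{1,3})$-free and $(sP_3+P_6)$-free graphs for every $s\geq 0$, and the monotonicity remark then gives the full first bullet.

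For the hard cases, $H\si K_{1,4}$ is covered by Chv\'atal's \NP-completeness for $K_{1,4}$-free graphs~\cite{Ch84}; $H\si P_{14}$ and $H\si 2P_7$ by the \NP-completeness for $(3P_6,2P_7,P_{14})$-free graphs~\cite{LL23}; $H\si 3P_5$ by the \NP-completeness for $(3P_5,P_{15})$-free graphs~\cite{LPR23a}; and $H\si H_i^*$ by the \NP-completeness for $(H_1^*,\dots,H_i^*)$-free graphs~\cite{FLPR25} (or by~\cite{Mo89} for odd~$i$), using that $(H_1^*,\dots,H_i^*)$-free graphs form a subclass of $H_i^*$-free graphs. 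The only case requiring a separate argument is $H\si C_r$ for some $r\geq 3$, i.e.\ $H$ is not a forest: here a graph whose girth exceeds $|V(H)|$ contains no copy (induced or otherwise) of any graph that has a cycle, hence is $H$-free, and \mc{} is \NP-complete on graphs of arbitrarily large girth~\cite{FLPR25}.

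The proof is therefore almost entirely bookkeeping. The one ingredient carrying real mathematical content is the $(H+P_3)$-closure lemma of~\cite{LPR23a} that powers the induction on~$s$; everything else is a direct citation or the one-line girth observation. I expect the main obstacle to be purely expository: keeping the two monotonicity directions straight, and verifying that the forbidden-graph lists appearing in the two bullets are matched exactly by the cited building blocks (in particular that $(3P_6,2P_7,P_{14})$-free hardness, not the weaker $(3P_5,P_{15})$-free one, is what yields the $P_{14}$ and $2P_7$ entries).
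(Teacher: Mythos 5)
Your proposal is correct and matches how the paper itself establishes Theorem~\ref{t-s0}: the paper gives no separate proof but assembles it in the introduction from exactly the same cited ingredients (claw-free and $P_6$-free algorithms plus the $(H+P_3)$-closure of~\cite{LPR23a} for the polynomial side; \cite{Ch84}, \cite{LL23}, \cite{LPR23a}, \cite{FLPR25}/\cite{Mo89} and the large-girth hardness for the \NP-complete side), combined with the same two monotonicity observations about induced-subgraph-free classes.
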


\noindent
{\bf Our Results.}
We first note that
also for every $d\geq 2$,
 {\sc $d$-Cut} is straightforward to solve for graphs of radius~$1$ (i.e., graphs with a dominating vertex). 
We will prove that for every $d\geq 2$, {\sc $d$-Cut} is also polynomial-time solvable for diameter~$2$ but \NP-complete for graphs of diameter~$3$ and radius~$2$. This leads to the following extensions of Theorem~\ref{t-diameter}:

\begin{theorem}\label{t-ddiameter}
Let $d\geq 2$. For $r\geq 1$, {\sc $d$-Cut} is polynomial-time solvable for graphs of diameter~$r$ if $r\leq 2$ and \NP-complete if $r\geq 3$.
\end{theorem}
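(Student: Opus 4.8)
Membership in \NP{} holds for every fixed $d$, since one can guess the partition $(B,R)$ and verify in polynomial time that every vertex has at most $d$ neighbours on the opposite side. So the plan is a polynomial-time algorithm for $r\le 2$ and an \NP-hardness proof for $r\ge 3$. The case $r=1$ is trivial: a graph of diameter~$1$ is a complete graph $K_n$, and $K_n$ has a $d$-cut if and only if $2\le n\le 2d$, because in a $d$-cut $(B,R)$ of a clique each vertex of $B$ has all of $R$ on the other side, forcing $|R|\le d$, and symmetrically $|B|\le d$.

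For $r=2$ I would extend the known polynomial-time algorithm for {\sc Matching Cut} on diameter-$2$ graphs (Theorem~\ref{t-diameter}). First, if a $d$-cut whose smaller side has size at most a constant $c=c(d)$ exists, it is found by brute force over the $O(n^{c})$ candidate small sides. Otherwise, I would guess an $O(d)$-sized ``seed'' suggested by a hypothetical $d$-cut $(B,R)$: an edge $b r$ of the cut together with the sets $N(b)\cap R$ and $N(r)\cap B$ of at most $d$ crossing neighbours at each endpoint. This fixes the side of every vertex of $N[b]\cup N[r]$, and since $G$ has diameter~$2$, every remaining vertex has a common neighbour with $b$ and with $r$ and hence already sees a classified blue and a classified red vertex; combined with the bound of $d$ crossing edges per vertex, this should leave only a structurally simple set of undecided vertices whose two-colouring reduces to a polynomial-time solvable problem (a $2$-SAT-type system, or an independent choice per component), as in the $d=1$ case. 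Since there are $n^{O(d)}$ seeds and $d$ is a constant, the algorithm is polynomial; the extra difficulty relative to $d=1$ is the weaker propagation caused by allowing up to $d$ crossing edges per vertex.

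For \NP-hardness with $r=3$, the plan is a polynomial reduction from a suitably restricted Boolean satisfiability problem such as \nae{}, in the spirit of Chv\'atal's reduction for {\sc Matching Cut} and of the diameter-$3$ reduction for $d=1$, producing graphs of diameter exactly~$3$. Variables and clauses become gadgets enforcing the intended behaviour of a $d$-cut; crucially, every connector that is a single edge when $d=1$ is replaced by a bundle of $d$ connections through $d$ private vertices, and every gadget is insulated by cliques on $2d+2$ vertices (which admit no $d$-cut, hence stay monochromatic) so that the extra slack available for $d\ge 2$ cannot create a spurious $d$-cut; a global apex-type structure keeps all pairwise distances at most~$3$, with a fixed non-adjacent pair at distance exactly~$3$. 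To lift hardness to every $r\ge 4$, given such a diameter-$3$ instance $G$ I would fix a vertex $z$ of $G$ of eccentricity~$3$ and append cliques $Q_1,\dots,Q_{r-3}$, each on $2d+2$ new vertices, with $Q_i$ complete to $Q_{i+1}$ and every vertex of $Q_1$ adjacent to $z$; since every $Q_i$ and every $Q_i\cup Q_{i+1}$ is a clique of size larger than $2d$, the whole appended set $Q$ is monochromatic in any $d$-cut, which forces $z$ to the same side (else $z$ would have $2d+2>d$ neighbours across the cut), so the $d$-cuts of the new graph are exactly the extensions of those of $G$ obtained by placing $Q$ on $z$'s side, and a short computation gives diameter exactly~$r$.

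I expect the main obstacle to be the diameter-$3$ hardness construction rather than the algorithm: the structure that shrinks all distances to~$3$ inherently introduces many crossing edges and must be carefully shielded by the clique padding, while at the same time the gadgets have to remain rigid enough that, even with up to~$d$ crossing edges allowed per vertex, the only $d$-cuts are the intended ones. Reconciling these opposing requirements is the crux.
\qed
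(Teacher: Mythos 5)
Both halves of your proposal stop short of an actual proof at precisely the points where the paper has to work hardest. For the diameter-$2$ algorithm, guessing a cut edge $br$ together with the at most $d$ crossing neighbours of each endpoint does colour all of $N[b]\cup N[r]$, but the finish is hand-waved: you assert that the remaining vertices form "a structurally simple set" whose colouring "reduces to a $2$-SAT-type system, or an independent choice per component, as in the $d=1$ case." This is exactly the step that breaks for $d\geq 2$: the $2$-SAT encoding for {\sc Matching Cut} relies on each vertex tolerating only one crossing neighbour, so that once a vertex sees a coloured vertex of each colour its remaining freedom is captured by binary clauses; with up to $d$ crossing neighbours allowed, the constraints are of higher arity, and the paper explicitly states it can no longer rely on the radius-$2$/$2$-SAT machinery of the $d=1$ proofs. (Also, diameter~$2$ only guarantees each uncoloured vertex a coloured common neighbour with $b$ and one with $r$; these may both be blue, so it need not "see a classified blue and a classified red vertex.") The paper's algorithm is genuinely different: after branching on the colouring of $N(v)$ for one vertex $v$ and colour-processing, it analyses the graph $G[Z]$ induced by the uncoloured vertices. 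If $G[Z]$ is disconnected, diameter~$2$ produces two coloured sets $N_1,N_2$ of size at most $2d$ dominating all of $Z$, and branching over the $O(n^{4d^2})$ colourings of their uncoloured neighbourhoods finishes. If $G[Z]$ is connected, one branches on the set $U$ of vertices at distance at least $3$ in $G[Z]$ from a chosen $z$ (possible because the common neighbours of $z$ and $U$ are coloured and at most $2d$ many), then on a dominating set $\{z\}\cup N_{G[Z]}(z)$ of the radius-$\leq 2$ remainder; the key invariant is that each such round gives every still-uncoloured vertex a new coloured neighbour, so after at most $2d$ rounds colour-processing colours everything. No completion argument of comparable substance appears in your sketch, so the polynomial-time half is not established.

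The \NP-hardness half is likewise only a plan: no gadget is constructed, no argument is given that the resulting graph has diameter~$3$, and no correctness proof is sketched beyond "gadgets enforcing the intended behaviour" shielded by $K_{2d+2}$ cliques; you yourself identify this as the unresolved crux. For comparison, the paper reduces from a restricted {\sc $3$-Satisfiability} (each variable occurring positively in exactly two clauses and negatively in exactly two, clauses all-positive or all-negative) to a graph consisting of a clique $K$ on the positive-clause vertices plus a vertex $C$, a clique $K'$ on the negative-clause vertices plus a vertex $D$, an independent set of variable vertices joined to the clauses containing them, and the single edge $CD$, with extra padding so that for $d\geq 3$ each clause vertex gains $d-2$ cross neighbours; this graph has radius~$2$ and diameter~$3$ (and is even $3P_2$-free), and the large cliques force the two-sided structure that encodes a satisfying assignment. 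Your lifting from diameter~$3$ to any $r\geq 4$ by appending a chain of $K_{2d+2}$ cliques to a vertex of eccentricity~$3$ is correct and is a reasonable way to handle a step the paper leaves implicit, but without a concrete diameter-$3$ construction and its correctness proof, the hardness direction, and hence the theorem, is not proved.
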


\begin{theorem}\label{t-dradius}
Let $d\geq 2$. For $r\geq 1$, {\sc $d$-Cut} is polynomial-time solvable for graphs of radius~$r$ if $r\leq 1$ and \NP-complete if $r\geq 2$.
\end{theorem}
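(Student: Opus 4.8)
For $r\le 1$ the problem is polynomial-time solvable; as indicated in the introduction this is easy, so I only sketch it. A graph of radius~$0$ is a single vertex and has no $d$-cut. If $G$ has radius~$1$, fix a dominating vertex~$v$; by the symmetry of the two sides of a $d$-cut we may assume $v$ is blue, so every red vertex is a neighbour of~$v$ and, as $v$ has at most $d$ red neighbours, the red side has size at most~$d$. It thus suffices to try all $O(n^d)$ non-empty sets $R\subseteq N(v)$ with $|R|\le d$ and test for each whether it defines a $d$-cut of $G$; the only non-automatic condition is that each vertex of $R$ has at most $d$ neighbours outside~$R$.

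The substance of the theorem is \NP-hardness for radius~$2$ (membership in \NP{} is clear). I would reduce from \nae{} and construct from a formula $\phi$ a graph $G_\phi$ with a designated vertex $c$ of eccentricity exactly~$2$, so that $\radius(G_\phi)=2$. The construction is governed by one rigidity principle: if $S\subseteq V(G_\phi)$ induces a subgraph without a $d$-cut, then $S$ is monochromatic in every $d$-cut of $G_\phi$, since otherwise $(S\cap B,\,S\cap R)$ would be a $d$-cut of $G_\phi[S]$ --- every vertex has at most $d$ neighbours across the global cut and hence at most $d$ inside~$S$. I would build all ``logical'' parts of the instance from such rigid blocks, using cliques on $2d+1$ vertices (which have no $d$-cut, since a clique on more than $2d$ vertices cannot be split), wired to each other and to a bounded number of free vertices so that the $d$-cuts of $G_\phi$ correspond exactly to the not-all-equal satisfying assignments of~$\phi$. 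The essential trick is to place $c$ together with \emph{all} of $N(c)$ inside one rigid block: then $c$ and its whole neighbourhood receive the same colour, so the degree constraint at~$c$ is vacuous, while making every other vertex of $G_\phi$ adjacent to a vertex of $N(c)$ forces $\radius(G_\phi)\le 2$; and $\radius(G_\phi)\ge 2$ since no single vertex dominates~$G_\phi$.

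For $r\ge 3$ I would add a uniform padding gadget: attach to $c$ a \emph{rigid chain}, a sequence of cliques on $2d+1$ vertices in which consecutive cliques, as well as $c$ and the first clique, are joined completely. This chain has no internal $d$-cut (a clique is too large to split, and between consecutive cliques every vertex has $2d+1>d$ neighbours on the other side), and since it meets $G_\phi$ only through $c$ it is monochromatic with~$c$ in every $d$-cut; hence attaching it neither creates nor destroys a $d$-cut. Taking the chain to consist of $2r-2$ cliques makes the minimum eccentricity of the resulting graph equal to~$r$ (attained within the chain), so that graph has radius exactly~$r$.

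The main obstacle is the conflict that surfaces at radius~$2$: radius~$1$ makes \dcut{} trivially polynomial because a dominating vertex bounds one side of every $d$-cut by~$d$, so a radius-$2$ hardness reduction needs a vertex that is ``almost dominating'' (eccentricity~$2$) yet imposes no restriction on the cut --- the way out is to bury $c$ inside a rigid block so that it is blind to the choices made elsewhere. Moreover, since {\sc Matching Cut} is polynomial-time solvable for graphs of radius~$2$, the construction must genuinely fail for $d=1$: encoding the logical gadgets inside the narrow layer of vertices at distance~$2$ from~$c$ seems to require the extra freedom of $d\ge 2$ crossing edges per vertex, and making this interface work while simultaneously excluding spurious $d$-cuts is where the real effort lies.
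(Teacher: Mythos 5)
Your radius-$1$ argument is fine (a dominating vertex $v$ forces the opposite side into $N(v)$ with size at most $d$, so $O(n^d)$ guesses suffice; this matches the paper, which treats radius $1$ via its small-dominating-set lemma), and your two auxiliary observations are sound: a connected induced subgraph with no $d$-cut is monochromatic in every $d$-cut of the whole graph, and a chain of pairwise completely joined $(2d+1)$-cliques can pad the radius from $2$ up to any exact value $r$ without creating or destroying $d$-cuts. But the substance of the theorem is the \NP-hardness for radius~$2$, and there your proposal contains no proof: you describe a plan (``rigid blocks wired to a bounded number of free vertices so that the $d$-cuts correspond exactly to the not-all-equal satisfying assignments''), you do not exhibit any variable or clause gadget, and you explicitly defer the decisive part (``making this interface work while simultaneously excluding spurious $d$-cuts is where the real effort lies''). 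The rigidity principle alone does not yield the reduction; the difficulty you correctly identify --- that for $d=1$ the problem is polynomial at radius~$2$, so the gadgets must exploit the slack of $d\geq 2$ crossing edges per vertex while the eccentricity-$2$ centre stays unconstraining --- is exactly the part that remains unsolved in your write-up. As it stands, the hardness half of the statement is asserted, not proved.

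For comparison, the paper does not use \nae{} here but reduces from a restricted {\sc $3$-Satisfiability} variant (Darmann and D\"ocker) in which every clause is all-positive or all-negative and every variable occurs exactly twice positively and twice negatively. The construction is concrete and small: a clique $K$ containing all positive-clause vertices plus a centre $C$, a clique $K'$ containing all negative-clause vertices plus a centre $D$, the edge $CD$, and an independent set $I$ of variable vertices joined to clause vertices by occurrence edges. Largeness of $K$ and $K'$ forces them monochromatic with opposite colours (if both had the same colour, a variable vertex of the other colour would see four neighbours of the opposite colour, exceeding $d=2$), the ``exactly two $+$ two'' occurrence pattern bounds the variable vertices' cross-degrees, and a red clause vertex with three blue variable neighbours violates the bound, which is precisely the clause constraint; for $d\geq 3$ the cliques are padded and $d-2$ extra cross edges are added per vertex. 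Note that the variables are \emph{free} vertices whose colour encodes the truth value --- the logic is enforced by degree counting at clause vertices, not by rigid blocks --- and the resulting graph is simultaneously $3P_2$-free, of radius~$2$ and diameter~$3$, so one reduction serves several theorems. Your padding remark for exact radius $r\geq 3$ is a reasonable supplement, but it cannot substitute for the missing core construction.
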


\noindent
Comparing Theorem~\ref{t-diameter} with Theorems~\ref{t-ddiameter} and~\ref{t-dradius} shows no difference in complexity for diameter but a complexity jump from $d=1$ to $d=2$ for radius.

For $d\geq 2$, we also give polynomial-time algorithms for {\sc $d$-Cut} for $(P_3+P_4)$-free graphs and $P_5$-free graphs. 
Our proof techniques use novel arguments, as we can no longer rely on a polynomial-time algorithm for radius~$2$ or a reduction to~$2$-SAT as for $d=1$~\cite{LL19,LPR22}.
Moreover, we show that for $d\geq 2$, \dcut{} is polynomial-time solvable for $(H+P_1)$-free graphs whenever \dcut{} is so for $H$-free graphs, thus the cases $\{H+sP_1\; |\; s\geq 0\}$ are all {\it (polynomially) equivalent}. All these results extend the known results for $d=1$, as can be seen from Theorem~\ref{t-s0}.

As negative results, we prove that
for all $d\geq 2$,  {\sc $d$-Cut} is \NP-complete for $3P_2$-free graphs, and
 that 
for every $d\geq 3$,
\dcut{} is \NP-complete 
for line graphs, and thus for $K_{1,3}$-free graphs.
Recently, Ahn et al.~\cite{AELPS25}  proved that $2$-{\sc Cut} is \NP-complete for $K_{1,3}$-free graphs. 
The \NP-completeness for graphs of large girth from~\cite{FLPR25} implies that for $d\geq 2$, {\sc $d$-Cut} is \NP-complete for $H$-free graphs if $H$ has a cycle.
Hence, by combining the above results, we obtain the following 
partial complexity classification
for $d\geq 2$:

\begin{theorem}\label{t-s2}
Let $d\geq 2$. For a graph~$H$, {\sc $d$-Cut} on $H$-free graphs is 
\begin{itemize}
\item polynomial-time solvable if $H\ssi  sP_1+P_3+P_4$ or $sP_1+P_5$ for some $s\geq 0$;
\item \NP-complete if $H\si K_{1,3}$, $3P_2$, or $C_r$ for some $r\geq 3$.
\end{itemize}
\end{theorem}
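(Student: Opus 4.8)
The statement is a synthesis result: every ingredient needed is among the results announced in the paragraph preceding Theorem~\ref{t-s2}, and the plan is to organize the combination around two elementary monotonicity observations. First, if $H\ssi H'$ then every $H$-free graph is also $H'$-free (a copy of $H'$ as an induced subgraph contains a copy of $H$), so the class of $H$-free graphs is contained in the class of $H'$-free graphs. Hence polynomial-time solvability of \dcut{} passes from the wider class to the narrower one (from $H'$-free graphs to $H$-free graphs when $H\ssi H'$), whereas \NP-completeness passes the other way (from $H$-free graphs to $H'$-free graphs when $H\ssi H'$, since an \NP-hard family of $H$-free instances is in particular a family of $H'$-free instances). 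I would record both directions once and then reuse them.

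For the polynomial-time cases, I would start from the base case $s=0$: by the two algorithmic results stated above, \dcut{} is polynomial-time solvable on $(P_3+P_4)$-free graphs and on $P_5$-free graphs for every $d\geq 2$. I would then feed these into the $+P_1$ closure --- \dcut{} is polynomial-time solvable on $(H+P_1)$-free graphs whenever it is on $H$-free graphs --- and induct on $s$ to obtain polynomial-time solvability on $(sP_1+P_3+P_4)$-free graphs and on $(sP_1+P_5)$-free graphs for all $s\geq 0$. The first monotonicity observation then extends this to every $H$ with $H\ssi sP_1+P_3+P_4$ or $H\ssi sP_1+P_5$.

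For the \NP-complete cases, I would collect three facts. (i) \dcut{} is \NP-complete on $3P_2$-free graphs for every $d\geq 2$ (stated above). (ii) \dcut{} is \NP-complete on $K_{1,3}$-free graphs for every $d\geq 2$: for $d\geq 3$ this follows from the \NP-completeness on line graphs, since line graphs are $K_{1,3}$-free, and for $d=2$ it is the result of Ahn et al.~\cite{AELPS25}. (iii) For every fixed $r\geq 3$, \dcut{} is \NP-complete on $C_r$-free graphs: a graph of girth at least $r+1$ has no induced $C_r$, so the large-girth \NP-completeness of~\cite{FLPR25} (applicable for $d\geq 2$) already lands inside the class of $C_r$-free graphs; here it is worth noting that ``$C_r\ssi H$ for some $r\geq 3$'' is exactly the condition that $H$ contains a cycle, the shortest cycle of $H$ being induced. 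The second monotonicity observation then lifts each of (i)--(iii) to every $H$ with $3P_2\ssi H$, $K_{1,3}\ssi H$, or $C_r\ssi H$ for some $r\geq 3$.

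I do not expect a genuine obstacle in this proof: all of the mathematical substance sits in the component results referenced above, and Theorem~\ref{t-s2} is essentially their bookkeeping (mirroring the structure of Theorem~\ref{t-s0} for $d=1$). The only points needing a moment's care are keeping the induced-subgraph inclusions oriented correctly in the two monotonicity steps, checking that the $+P_1$ induction really does reach every value of $s$, and --- to see that the two halves of the classification do not conflict --- verifying that no $H$ with $H\ssi sP_1+P_3+P_4$ or $H\ssi sP_1+P_5$ contains $K_{1,3}$, $3P_2$, or a cycle as an induced subgraph; this is routine, since every such $H$ is a linear forest (so it has maximum degree at most $2$ and no cycle), and a quick inspection shows it has no induced $3P_2$.
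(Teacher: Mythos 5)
Your proposal is correct and follows essentially the same route as the paper: the theorem is obtained by combining the polynomial-time results for $(P_3+P_4)$-free and $P_5$-free graphs with the $(H+P_1)$ closure theorem, and the hardness side from the $3P_2$-free reduction, the line-graph result for $d\geq 3$ together with the result of Ahn et al.\ for $d=2$ (line graphs being $K_{1,3}$-free), and the large-girth \NP-completeness of~\cite{FLPR25} for the case where $H$ contains a cycle, all glued together by the standard monotonicity of $H$-free classes under the induced-subgraph relation.
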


\noindent 
Theorem~\ref{t-s2} leaves
only three non-equivalent open cases for every $d\geq 2$, namely when $H=2P_4$, $H= P_6$ and $H=P_7$.
From Theorems~\ref{t-s0} and~\ref{t-s2} we observe that there are complexity jumps from $d=1$ to $d=2$ for $sP_2$-free graphs when $s=3$ 
and for $K_{1,3}$-free graphs.

We prove our polynomial-time results in Section~\ref{s-poly} and our \NP-completeness results in Section~\ref{s-np}. We finish our paper with 
 some
open problems in Section~\ref{s-con}. We start with providing some basic results in Section~\ref{s-prelim}.

\section{Preliminaries}\label{s-prelim}

Throughout the paper, we only consider finite, undirected graphs without multiple edges and self-loops. We first define some general graph terminology.

Let $G=(V,E)$ be a graph. 
The \emph{line graph} $L(G)$ of $G$ has the edges of $G$ as its vertices, with an edge between two vertices in $L(G)$ if and only if the corresponding edges in $G$ share an end-vertex. Let $u\in V$. The set $N(u)=\{v \in V\; |\; uv\in E\}$ is the {\it neighbourhood} of $u$, and $|N(u)|$ is the {\it degree} of $u$.  Let $S\subseteq V$. 
The {\it (open) neighbourhood} of $S$ is the set $N(S)=\bigcup_{u\in S}N(u)\setminus S$, and the {\em closed neighbourhood} $N[S] = N(S) \cup S$.
We let $G[S]$ denote the subgraph of $G$ {\it induced} by $S$, which is obtained from $G$ by deleting the vertices not in $S$. We let $G-S=G[V\setminus S]$.
If no two vertices in $S$ are adjacent, then $S$ is an {\it independent set} of $G$. If every two vertices in $S$ are adjacent, then $S$ is a {\it clique} of $G$. 
If every vertex of $V\setminus S$ has a neighbour in~$S$, then $S$ is a {\it dominating} set of~$G$. We also say that $G[S]$ {\it dominates} $G$. The {\it domination number} of $G$ is the size of a smallest dominating set of $G$. 
Let $T\subseteq V\setminus S$. The sets $S$ and $T$ are \emph{complete} to each other if every vertex of $S$ is adjacent to every vertex of $T$.

Let $u$ be a vertex in a connected graph $G$. We denote the distance of $u$ to some other vertex $v$ in $G$ by $\dist_G(u,v)$.
Recall that the eccentricity of $u$ is the maximum distance between $u$ and any other vertex of $G$. Recall also that the diameter $\diam(G)$ of $G$ is the maximum eccentricity over all vertices of~$G$ and that
the radius $\radius(G)$ of $G$ is the minimum eccentricity over all vertices of~$G$. Note that $\radius(G)\leq \diam(G)\leq 2\cdot\radius(G)$.

We now generalize some colouring terminology that was used in the context of matching cuts (see, e.g.,~\cite{Fe23,LPR22}). 
A {\it red-blue colouring} of a graph $G$ assigns every vertex of $G$ either the colour red or blue.
For $d\geq 1$, a red-blue colouring is a {\it red-blue $d$-colouring} if every blue vertex has at most $d$ red neighbours; every red vertex has at most $d$ blue neighbours; and both colours red and blue are used at least once. See Figure~\ref{f-examplesdcut} for examples of a red-blue $1$-colouring and a red-blue $3$-colouring.

We make the following observation.

\begin{observation}\label{o-cut-colouring}
For every $d\geq 1$, a connected graph $G$ has a $d$-cut if and only if it has a red-blue $d$-colouring.
\end{observation}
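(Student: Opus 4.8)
The plan is to prove Observation~\ref{o-cut-colouring} directly from the definitions by exhibiting a bijective-style correspondence between $d$-cuts and red-blue $d$-colourings of a connected graph $G=(V,E)$. Both directions are immediate once the objects are unpacked, so the ``proof'' is really a matter of carefully translating one piece of terminology into the other; I expect no genuine obstacle, only the need to be precise about which conditions match up.

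First I would argue the forward direction. Suppose $G$ has a $d$-cut $M$. By definition there is a partition of $V$ into two non-empty sets $B$ and $R$ such that $M$ is exactly the set of edges with one end-vertex in $B$ and the other in $R$, and every vertex of $B$ has at most $d$ neighbours in $R$ and vice versa. Colour the vertices of $B$ blue and those of $R$ red. Since $B$ and $R$ are non-empty, both colours are used. A blue vertex $v\in B$ has at most $d$ neighbours in $R$, i.e.\ at most $d$ red neighbours; symmetrically every red vertex has at most $d$ blue neighbours. Hence this red-blue colouring is a red-blue $d$-colouring.

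Conversely, suppose $G$ has a red-blue $d$-colouring. Let $B$ be the set of blue vertices and $R$ the set of red vertices. Both sets are non-empty because both colours are used, and they partition $V$. Let $M$ be the set of edges of $G$ with one end-vertex in $B$ and the other in $R$; then by construction $M$ is an edge cut realised by the partition $(B,R)$. Every blue vertex has at most $d$ red neighbours and every red vertex has at most $d$ blue neighbours, which is precisely condition (ii) in the definition of a $d$-cut. Therefore $M$ is a $d$-cut of $G$, completing the equivalence. The only point worth stressing is that the connectedness of $G$ plays no role beyond what is already built into the problem statement (the partition into two non-empty parts); in particular nothing needs to be checked about $G[B]$ or $G[R]$ being connected, since neither a $d$-cut nor a red-blue $d$-colouring imposes such a requirement.
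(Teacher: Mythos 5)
Your proof is correct and is exactly the direct definition-unpacking the paper has in mind: the paper states this as an observation without proof, precisely because identifying $B$/$R$ with the blue/red classes and taking $M$ to be the bichromatic edges gives the equivalence immediately. Your remark that connectedness is not needed beyond the non-emptiness built into both definitions is also accurate (connectedness merely guarantees the cut contains at least one edge).
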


If every vertex of a set $S\subseteq V$ has the same colour (either red or blue) in a red-blue colouring, then $S$, and also $G[S]$, are  {\it monochromatic}. An edge with a blue and a red end-vertex is \emph{bichromatic}.
Note that for every $d\geq1$,  the graph $K_{2d+1}$ is monochromatic in every red-blue $d$-colouring, and that every connected graph with a red-blue $d$-colouring contains a bichromatic edge.

We now generalize a known lemma for {\sc Matching Cut} (see, e.g., \cite{LPR22}).

\begin{lemma}\label{l-smalldom}
For every $d,g\geq 1$, it is possible to find in $O(2^gn^{dg+2})$-time a red-blue $d$-colouring (if it exists) of a graph $G$ with $n$ vertices and domination number~$g$.
\end{lemma}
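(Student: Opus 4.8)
The plan is to reduce, via a bounded amount of guessing, the search for a red-blue $d$-colouring to a deterministic \emph{propagation} step driven by the dominating set, and then to verify the resulting colouring directly. By Observation~\ref{o-cut-colouring} it suffices to search for a red-blue $d$-colouring. First I would compute a dominating set $D$ with $|D|\le g$: since $G$ has domination number $g$, such a set exists, and one can be found by testing all $O(n^g)$ vertex subsets of size at most $g$, checking domination for each in $O(n+m)$ time; this costs $O(n^{g+2})$, which is within the claimed bound because $d\ge 1$.

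Now suppose $G$ has a red-blue $d$-colouring $c$ (otherwise the algorithm must report that none exists). I would guess the restriction $c|_D$, which has at most $2^{g}$ possibilities, and let $D=D_R\cup D_B$ be the induced partition into red and blue vertices. The crucial structural point is that $c$ is then essentially determined by two small ``boundary'' sets: $B'$, the set of blue vertices \emph{outside $D$} adjacent to some vertex of $D_R$, and $R'$, the set of red vertices outside $D$ adjacent to some vertex of $D_B$. Indeed, every $v\notin D$ has a neighbour in $D$; if all its $D$-neighbours lie in $D_R$ then $v$ is red unless $v\in B'$, if they all lie in $D_B$ then $v$ is blue unless $v\in R'$, and if $v$ has $D$-neighbours of both colours then $v\in R'\cup B'$ regardless of $c(v)$. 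Since each red vertex of $D$ has at most $d$ blue neighbours we get $|B'|\le d|D_R|$, and symmetrically $|R'|\le d|D_B|$. Hence I would also guess the pair $(R',B')$; the number of choices is $O(n^{d|D_B|})\cdot O(n^{d|D_R|})=O(n^{d|D|})=O(n^{dg})$, using $|D_R|+|D_B|=|D|\le g$.

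For each fixed guess $(c|_D,R',B')$ the candidate colouring is unique: colour every vertex in $D_R\cup R'$ red, every vertex in $D_B\cup B'$ blue, and every remaining vertex with the common colour of its $D$-neighbours, discarding the guess if the guess is syntactically inconsistent (e.g.\ $R'\cap B'\ne\emptyset$, or a remaining vertex has $D$-neighbours of two colours, or $R'\not\subseteq N(D_B)\setminus D$, etc.). Then I would check in $O(n+m)$ time whether this colouring is a red-blue $d$-colouring, i.e.\ every red vertex has at most $d$ blue neighbours, every blue vertex at most $d$ red ones, and both colours are used; if so, output it, and if no guess succeeds, report that none exists. Correctness is immediate: if $c$ exists, the guess obtained by taking $c|_D$ together with $R'=\{$red vertices outside $D$ in $N(D_B)\}$ and $B'=\{$blue vertices outside $D$ in $N(D_R)\}$ reproduces $c$ exactly and passes the verification. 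The total running time is $O(n^{g+2})+2^{g}\cdot O(n^{dg})\cdot O(n^{2})=O(2^{g}n^{dg+2})$.

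I expect the main (and only real) obstacle to be getting the running-time bookkeeping right rather than the algorithm itself: a careless analysis would guess $R'$ and $B'$ as two independent subsets of size up to $dg$, costing $O(n^{2dg})$. The fix is precisely the observation that $|B'|$ is bounded by $d|D_R|$ and $|R'|$ by $d|D_B|$, and that these two bounds add up to $d|D|\le dg$, so the two sets can be guessed jointly at cost $O(n^{dg})$. This is exactly the mechanism that, for $d=1$, recovers the known $O(2^{g}n^{g+2})$ bound for {\sc Matching Cut}; everything else is a routine generalisation of that argument.
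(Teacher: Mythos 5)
Your proposal is correct and follows essentially the same strategy as the paper: branch over the $2^{g}$ colourings of a dominating set $D$ of size at most $g$, then over $O(n^{dg})$ choices of which neighbours of $D$ receive the opposite colour (you package this as the two boundary sets $R'$ and $B'$, the paper as at most $d$ oppositely coloured neighbours per vertex of $D$), after which all remaining vertices are forced and the candidate colouring is verified in quadratic time. The only cosmetic difference is that you additionally spell out how to find $D$ by brute force in $O(n^{g+2})$ time, which stays within the claimed bound.
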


\begin{proof}
Let $d,g\geq 1$ and $G$ be a graph on $n$ vertices with domination number~$g$.
Let $D$ be a dominating set~$D$ of $G$ that has size at most $g$. 

We consider all $2^{|D|}\leq 2^g$ options of giving the uncoloured vertices of $D$ either colour red or blue. For each red-blue colouring of $D$ we do as follows.
For every red vertex of $D$, we consider all $O(n^d)$ options of colouring at most~$d$ of its uncoloured neighbours blue, and we colour all of its other uncoloured neighbours red.
Similarly, for every blue vertex of $D$, we consider all $O(n^d)$ options of colouring at most $d$ of its uncoloured neighbours red, and we colour all of its other uncoloured neighbours blue.
As $D$ dominates $G$, we obtained a red-blue colouring $c$ of the whole graph $G$. We discard the option if $c$ is not a red-blue 
$d$-colouring
of~$G$.

We note that any red-blue $d$-colouring
of $G$, if it exists, will be found by the above algorithm. As the total number of options is $O(2^gn^{dg})$ and checking if a red-blue colouring is a red-blue $d$-colouring
takes $O(n^2)$ time, our algorithm has total running time $O(2^gn^{dg+2})$.
\qed
\end{proof}

Let $d\geq 1$. Let $G=(V,E)$ be a connected graph and $S, T \subseteq V$ be two disjoint sets. A \emph{red-blue $(S, T)$-$d$-colouring} of $G$ is a red-blue $d$-colouring of $G$ that colours all the vertices of $S$ red and all the vertices of $T$ blue. We call $(S,T)$ a {\it precoloured pair} of $(G,d)$ which is {\it colour-processed} if every vertex of $V\setminus (S\cup T)$ is adjacent to at most $d$ vertices of $S$ and to at most $d$ vertices of~$T$. 

By the next lemma, we may assume without loss of generality that a precoloured pair $(S,T)$ is always colour-processed.

\begin{lemma}\label{l-process}
Let $G$ be a connected graph with a precoloured pair $(S,T)$. It is possible, in polynomial time, to either colour-process $(S,T)$ 
or to find that $G$ has no red-blue $(S,T)$-$d$-colouring.
\end{lemma}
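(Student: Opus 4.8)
The plan is to turn the colour-processing requirement into a fixpoint/closure computation, driven by a simple forcing rule. Recall what ``colour-processed'' demands: once a vertex $s$ is red, it may have at most $d$ blue neighbours, so if $s$ already has $d$ blue neighbours in $T$, then \emph{every} uncoloured neighbour of $s$ must be red; symmetrically for blue vertices of $T$ with $d$ red neighbours. These are the only immediate implications forced by a single already-coloured vertex, so the algorithm repeatedly scans $S$ and $T$ looking for such ``saturated'' vertices and extends the colouring accordingly.

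Concretely, I would maintain the pair $(S,T)$ and repeat the following step until no change occurs. For each $s \in S$ with $|N(s)\cap T| = d$, move all of $N(s)\setminus(S\cup T)$ into $S$; for each $t \in T$ with $|N(t)\cap S| = d$, move all of $N(t)\setminus(S\cup T)$ into $T$. If at any point we are forced to move a vertex into $S$ that already lies in $T$ (or vice versa), or if some $s\in S$ ends up with $|N(s)\cap T| > d$ or some $t \in T$ ends up with $|N(t)\cap S| > d$, we stop and report that no red-blue $(S,T)$-$d$-colouring exists. Otherwise the loop terminates with a pair $(S',T')\supseteq (S,T)$ in which no vertex of $S'$ has $d{+}1$ blue neighbours, no vertex of $T'$ has $d{+}1$ red neighbours, and — crucially — no vertex of $V\setminus(S'\cup T')$ is adjacent to $d{+}1$ vertices of $S'$ or of $T'$ (if it were, it would have been forced into the larger set in an earlier iteration); hence $(S',T')$ is colour-processed.

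Two things need to be checked. \emph{Correctness}: every vertex we add to $S$ (resp. $T$) is forced — in any red-blue $d$-colouring that colours $S$ red and $T$ blue, a vertex $s$ with $d$ blue neighbours cannot tolerate another blue neighbour, so all its other neighbours are red. By induction on the order in which vertices are added, any red-blue $(S,T)$-$d$-colouring of $G$ is also a red-blue $(S',T')$-$d$-colouring, and conversely; in particular, if the process aborts, no such colouring exists, and if it succeeds, $(S',T')$ has exactly the same set of extensions as $(S,T)$, so replacing one by the other is harmless. \emph{Running time}: each successful iteration of the loop strictly increases $|S|+|T|$, which is bounded by $n$, so there are at most $n$ iterations, and each iteration scans at most $n$ vertices and their neighbourhoods in $O(n+m)$ time; the whole procedure runs in polynomial time. (One should note that the closure $(S',T')$ is independent of the order in which saturated vertices are processed — a standard confluence observation — but this is not even needed for the statement, only for a clean description.)

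I do not expect a genuine obstacle here; the only subtlety is bookkeeping. One must be careful that adding a vertex $v$ to $S$ can cause a previously-unsaturated $s\in S$ to become saturated (if $v$'s new red colour pushes some \emph{blue} count — no: adding to $S$ increases red counts, so it can saturate vertices of $T$, and adding to $T$ can saturate vertices of $S$), so the loop really must be re-run to a fixpoint rather than done in a single pass; and one must treat the input case $S=\emptyset$ or $T=\emptyset$, or $S\cap T\neq\emptyset$, as immediate ``no'' (a red-blue colouring uses both colours and is a genuine colouring), though typically a precoloured pair is assumed disjoint with both parts possibly empty and the algorithm handles empty parts trivially. The argument that the terminal pair is colour-processed is the one line to state carefully: a vertex outside $S'\cup T'$ with more than $d$ neighbours in $S'$ would have, at the moment its $d$-th neighbour entered $S'$, triggered the forcing rule on that neighbour and thus been moved into $S'$, a contradiction.
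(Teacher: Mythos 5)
There is a genuine gap: you are using the wrong forcing rule for what ``colour-processed'' actually requires. The definition is a condition on the \emph{uncoloured} vertices: every vertex of $V\setminus(S\cup T)$ must have at most $d$ neighbours in $S$ and at most $d$ neighbours in $T$. The rule that achieves this (and is the one the paper uses) fires on uncoloured vertices: if an uncoloured $v$ has $d+1$ neighbours in $S$, it cannot be blue, so move $v$ into $S$; symmetrically for $T$; and report {\tt no} if $v$ has $d+1$ neighbours in both. Your rule instead fires on \emph{saturated coloured} vertices (a red $s$ with $|N(s)\cap T|=d$ forces its uncoloured neighbours red). That rule is sound, but it neither implies nor is implied by the needed condition, and your closure can terminate without the condition holding. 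Concretely, take $d=2$ and an uncoloured vertex $v$ adjacent to three vertices of $S$, none of which has any neighbour in $T$: no vertex of $S\cup T$ is saturated, your loop does nothing and aborts nothing, yet $v$ has $d+1$ neighbours in $S$, so the returned pair is not colour-processed (and reporting {\tt no} would also be wrong, since a valid extension with $v$ red may exist; the correct action is to move $v$ into $S$, which your procedure never does).

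Your closing justification reflects this mismatch: ``at the moment its $d$-th neighbour entered $S'$, [$v$] would have triggered the forcing rule on that neighbour'' is a non sequitur, because your rule is triggered by that neighbour's count of $T'$-neighbours, not by $v$'s count of $S'$-neighbours. Everything else in your write-up (fixpoint structure, soundness of each move, the potential argument $|S|+|T|$ increasing, polynomial time) is fine and carries over verbatim once the rule is replaced by the vertex-centred one above; the paper's proof is exactly that replacement applied exhaustively, with the {\tt no}-answer returned when some uncoloured vertex simultaneously has $d+1$ neighbours in $S$ and $d+1$ neighbours in $T$.
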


\begin{proof}
We apply the following rules on $G$. Let $Z=V\setminus (S\cup T)$. If $v$ is adjacent to $d+1$ vertices in $S$, then move $v$ from $Z$ to $S$. If $v$ is adjacent to $d+1$ vertices in $T$, then move $v$ from $Z$ to $T$. Return {\tt no} if a vertex $v \in Z$ at some point becomes adjacent to $d+1$ vertices in $S$ as well as to $d+1$ vertices in $T$. We apply these three rules exhaustively.  It is readily seen that each of these rules is safe to use, and moreover, can be verified and applied in polynomial time. After each application of a rule, we either stop or have decreased the size of $Z$ by at least one vertex. Hence, the procedure is correct and takes polynomial time. \qed
\end{proof}

\section{Polynomial-Time Results}\label{s-poly}

We now show our polynomial-time results for \dcut{} for $d\geq 2$. These results complement the corresponding known polynomial-time results for $d=1$ that we mentioned in Section~\ref{s-intro}
(and the proofs of the new results yield alternative proofs for the known results if we set $d=1$).

We first consider the class of graphs of diameter at most~$2$.

\begin{theorem}\label{t-diam}
For every $d\geq 2$, the \dcut{} problem is polynomial-time solvable for graphs of diameter at most~$2$.
\end{theorem}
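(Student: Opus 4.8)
The plan is to exploit the fact that graphs of diameter at most~$2$ have small domination number, so that Lemma~\ref{l-smalldom} almost suffices — but not quite, because a connected graph of diameter~$2$ need only have domination number bounded by a constant if it has a dominating edge, which is not guaranteed. Recall that every connected graph of diameter at most~$2$ has a dominating set of size at most~$3$ unless it has a dominating edge (in fact one can do better). So first I would check whether $G$ has a dominating set of size at most~$3$; if so, apply Lemma~\ref{l-smalldom} with $g=3$ to solve the problem in polynomial time. The remaining, genuinely interesting case is when $G$ has a dominating \emph{vertex} or a dominating \emph{edge} of small ``width'' — more precisely, the hard instances are those where a single vertex or edge dominates almost everything, so that Lemma~\ref{l-smalldom} alone does not cut down the search space enough once we remember that $d\geq 2$ allows many bichromatic edges.

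The core of the argument will therefore be a structural analysis of red-blue $d$-colourings of a diameter-$2$ graph $G$ relative to a fixed dominating vertex $x$ (the dominating-edge case is analogous, guessing the colours of both endpoints). Say $x$ is blue. Then every red vertex is a neighbour of $x$, and $x$ has at most $d$ red neighbours, so the red side $R$ consists of at most $d$ vertices, all adjacent to $x$. I would guess $R$ directly: there are $O(n^d)$ candidate sets $R\subseteq N(x)$ of size at most~$d$. For each candidate, colour everything else blue, and verify in $O(n^2)$ time that (i) $x$ and the other blue vertices each see at most $d$ red vertices — automatic for $x$, and bounded-size $R$ makes this cheap — and (ii) crucially, each vertex of $R$ has at most $d$ blue neighbours; since $|R|\le d$ this is a condition only on the at-most-$d$ vertices of $R$, so it is checkable in $O(dn)$ time. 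Symmetrically guess the case $x$ red. Because the red side always has at most $d$ vertices whenever there is a dominating vertex, the total running time is $O(n^{d+2})$ or so, which is polynomial for fixed~$d$.

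The main obstacle is making the case split clean: I need a precise combinatorial lemma of the form ``a connected graph of diameter at most~$2$ either has a dominating set of size at most~$3$, or has a dominating vertex, or has a dominating edge'' — actually the first case already subsumes everything, so the real point is that a size-$\le 3$ dominating set is not itself enough and one must further observe that in a red-blue $d$-colouring the smaller colour class is \emph{entirely contained} in the neighbourhood of any one vertex of the dominating set, hence, as above, has size $\le d$. Concretely: let $D=\{x_1,x_2,x_3\}$ dominate $G$; in any red-blue $d$-colouring at least one colour class, say red, contains no $x_i$, so $R\subseteq N(x_1)\cup N(x_2)\cup N(x_3)$ and moreover $R\subseteq N(x_i)$ for whichever $x_i$'s are blue with $\le d$ red neighbours — combining gives $|R|\le 3d$. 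So I would instead argue directly that the smaller colour class has size $O(d)$, enumerate it in $O(n^{3d})$ time, complete the colouring greedily, and verify. The delicate part is handling the subcase where all three vertices of $D$ end up the \emph{same} colour as the majority class, versus mixed; but in every subcase at least one $x_i$ lies opposite to the minority class or the minority class is empty-forcing, so a short argument pins $|R|=O(d)$. Assembling these pieces gives the polynomial-time algorithm.
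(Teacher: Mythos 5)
Your proposal rests on the structural claim that a connected graph of diameter at most~$2$ has a dominating set of size at most~$3$ (or a dominating vertex/edge), so that the minority colour class has size $O(d)$ and can be enumerated. This claim is false, and it is precisely where the whole approach collapses: diameter~$2$ does not bound the domination number by any constant. For example, a random graph $G(n,p)$ with $p=\Theta(\sqrt{\log n/n})$ has diameter~$2$ with high probability while its domination number is $\Theta(\sqrt{n\log n})$; in general the domination number of diameter-$2$ graphs grows with $n$. (If your lemma were true, Lemma~\ref{l-smalldom} alone would settle Theorem~\ref{t-diam} in a few lines, and it would also trivialise the known $d=1$ results, which instead required a $2$-SAT-based argument.) Your reasoning that a blue \emph{dominating} vertex $x$ forces the red class to lie in $N(x)$ and hence have size at most $d$ is correct, but that only handles radius~$1$ (a universal vertex), which the paper already notes is straightforward; it says nothing about diameter~$2$. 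Even in your dominating-edge subcase, if the two endpoints receive different colours both colour classes can be large, so ``guess the small side'' does not apply there either.

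Because no small dominating set is available, the paper's proof has to work much harder: it branches on the colouring of one vertex and its neighbourhood, colour-processes, and then analyses the set $Z$ of uncoloured vertices. If $G[Z]$ is disconnected, the diameter-$2$ property supplies, for a vertex in one component, a set of at most $2d$ \emph{already coloured} common neighbours dominating the other components, and one branches over the colourings of their neighbourhoods; if $G[Z]$ is connected, one first colours the vertices of $Z$ far from a chosen $z\in Z$ (again via at most $2d$ coloured common neighbours) and then branches on a radius-$2$ dominating ball inside $G[Z]$, and this second case can recur only $2d$ times because each round gives every surviving uncoloured vertex a new coloured neighbour. None of this iterative machinery is replaceable by enumerating an $O(d)$-size colour class, so your proposal has a genuine gap rather than an alternative proof.
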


\begin{proof}
Let $G=(V,E)$ be a graph of diameter at most~$2$. Note that this implies that $G$ is connected. As \dcut{} is trivial for graphs of diameter~$1$, we assume that $G$ has diameter~$2$.
By Observation~\ref{o-cut-colouring}, we have proven the theorem if we can decide in polynomial time if $G$ has a red-blue $d$-colouring.

Let $v\in V$. Without loss of generality we may colour $v$ red.
Since $v$ has at most $d$ blue neighbours in any red-blue $d$-colouring, we can branch over all $O(n^{d})$ options to colour the neighbourhood of $v$.
In the case where all neighbours of $v$ are red, we branch over all $O(n)$ options to colour some vertex~$x$ of $G$ blue.
We consider each option separately.

Let $R$ and $B$ be the set of red and blue vertices, respectively.
 Let $S' = \{v\} \cup \{u\; |\; u \in N(v), u \in R\}$ and let $T' =  \{u\; |\; u \in N(v), u \in B\}$. We colour-process $(S',T')$, resulting in a pair $(S,T)$.
Let $Z = V\setminus (S \cup T)$ be the set of uncoloured vertices.
As $(S,T)$ is colour-processed, every vertex in $Z$ has at most $d$ red neighbours and at most $d$ blue neighbours and thus in total at most $2d$ coloured neighbours. 
We distinguish between the following two cases:

\medskip
\noindent
{\bf Case 1.} $G[Z]$ consists of at least two connected components.\\
Let $Z_1, \dots, Z_r$ be the connected components of $G[Z]$. By assumption, we have that $r\geq 2$.
Let $v_1 \in V(Z_1)$ and $v_2 \in V(Z_2)$.
Since $G$ has diameter~$2$, we know that $v_1$ has a common neighbour with every vertex in $Z_2, \dots, Z_r$. 
Let $N_1$ be the set of all vertices that are adjacent to $v_1$ as well as to a vertex in $V(Z_2)\cup \cdots \cup V(Z_r)$.
Note that $N_1 \subseteq S \cup T$ and that every vertex of $V(Z_2)\cup \cdots \cup V(Z_r)$ has at least one neighbour in $N_1$.
Moreover, as $v_1$ has at most $2d$ coloured neighbours and $N_1 \subseteq S \cup T$, we find that $|N_1| \leq 2d$.
By the same arguments, we find a set $N_2\subseteq S\cup T$, with $|N_2| \leq 2d$ consisting of the common neighbours of $v_2$ and $Z_1$, such that every vertex of $Z_1$ has at least one neighbour in $N_2$. See also Figure~\ref{f-diam2}.

In a red-blue $d$-colouring, every vertex in $N_1\cup N_2$ has at most $d$ neighbours of the other colour. 
Thus, we can try all $O(n^{4d^2})$ options to colour the uncoloured neighbours of $N_1\cup N_2$.
If in the resulting colouring any vertex has more than $d$ neighbours of the other colour we discard it. Suppose not. Then, as every vertex of $V(Z_1)\cup \cdots \cup V(Z_r)$ is a neighbour of at least one vertex in $N_1\cup N_2$, we found a red-blue $d$-colouring of $G$.

\begin{figure}[b]
\centering
\begin{tikzpicture}

\tikzstyle{comp}=[,draw, ,thick, minimum width = 35pt, minimum height = 30pt]
\tikzstyle{bigcomp}=[draw, thick, minimum width = 80pt, minimum height = 40pt]
\tikzstyle{rbset}=[draw, ,thick, minimum width = 55pt, minimum height = 40pt]
\tikzstyle{n1}=[draw, gray, thick, , minimum width = 40pt, minimum height = 16pt]
\tikzstyle{U}=[draw, gray, thick, , minimum width = 28pt, minimum height = 38pt]

\normalsize
\begin{scope}[]
\node[rbset, lightblue, label={:$T$}](b) at (1,2){};
\node[rbset, nicered, label={:$S$}](r) at (3,2){};
\node[rvertex, label=right:$v$](v) at (2.3, 2.3){};
\node[n1, label=left:$N_1$](n) at (2,1.6){};
\node[bvertex](u1) at (1.5, 1.6){};
\node[bvertex](u2) at (1.8, 1.6){};
\node[rvertex](u3) at (2.4, 1.6){};

\node[comp, label=below:$Z_1$](z1) at (0,0){};
\node[comp, label=below:$Z_2$](z2) at (2,0){};
\node[comp, label=below:$Z_3$](z3) at (4,0){};

\node[vertex, label=left:$v_1$](v1) at (0,0){};
\node[vertex](v2) at (2,0.3){};
\node[vertex](v3) at (4,-0.2){};

\draw[edge](v1) -- (u1);
\draw[edge](v1) -- (u2);
\draw[edge](v1) -- (u3);
\draw[edge](v2) -- (u1);
\draw[edge](v2) -- (u3);
\draw[edge](v3) -- (u2);

\end{scope}

\begin{scope}[shift = {(6,0)}]
\node[rbset, lightblue, label={:$T$}](b) at (1,2){};
\node[rbset, nicered, label={:$S$}](r) at (3,2){};
\node[rvertex, label=right:$v$](v) at (2.3, 2.3){};

\node[bigcomp, label=below:$Z$](Z) at (2,0){};
\node[vertex, label=below:$z$](z) at (1,0){};

\node[bvertex](u1) at (1.5, 1.6){};
\node[bvertex](u2) at (1.8, 1.6){};
\node[rvertex](u3) at (2.4, 1.6){};

\node[vertex](z1) at (1.5,0.3){};
\node[vertex](z2) at (1.5,-0.3){};

\node[vertex](z3) at (2,0.5){};
\node[vertex,](z4) at (2,0){};
\node[vertex](z5) at (2,-0.5){};

\node[U] (U) at (2.87,0){};
\node[vertex](v1) at (2.6,0.4){};
\node[vertex](v2) at (2.6,-0.2){};
\node[vertex](v3) at (3.2,0.2){};
\node[](Utext) at (3.1,-0.45){$U$};

\draw[edge](z) -- (u1);
\draw[edge](z) -- (u2);
\draw[edge](z) -- (u3);
\draw[edge](v2) -- (u1);
\draw[edge](v1) -- (u3);
\draw[edge](v3) -- (u2);

\draw[edge](z) -- (z1);
\draw[edge](z) -- (z2);
\draw[edge](z1) -- (z3);
\draw[edge](z1) -- (z4);
\draw[edge](z2) -- (z4);
\draw[edge](z2) -- (z5);
\draw[edge](z3) -- (z4);

\draw[edge](z3) -- (v1);
\draw[edge](z4) -- (v2);
\draw[edge](v2) -- (v3);

\end{scope}

\end{tikzpicture}
\caption{Cases 1 (left) and 2 (right) in the proof of Theorem~\ref{t-diam}.}\label{f-diam2}
\end{figure}

\medskip
\noindent
{\bf Case 2.} $G[Z]$ is connected.\\
We first assume that $G[Z]$ has radius at least~$3$.
We let $z \in Z$, and we let $$U = \{u \in Z : \dist_{G[Z]}(z,u) \geq 3\}.$$ Observe that $U \neq \emptyset$, as $G[Z]$ has radius at least $3$. For an illustration of $U$, see Figure~\ref{f-diam2}.
As $G$ has diameter~$2$, we find that $z$ has a common neighbour with every vertex of $U$. As the common neighbours of $z$ and the vertices in $U$ are not in $Z$, they belong to $S \cup T$, meaning they are coloured red or blue. As $z$ was not coloured after the colour-processing, we find that $z$ has at most $2d$ coloured neighbours. Each of these coloured neighbours of $z$ can have at most $d$ neighbours of the other colour. So we can branch over all $O(n^{2d^2})$ options to colour $U$. 

For each of the $O(n^{2d^2})$ branches, we do as follows.
If some vertex of $U$, which is now coloured, has more than $d$ neighbours of the other colour, then we discard the branch.
Else, we consider the graph $G[Z \setminus U]$, which consists of all uncoloured vertices and which has radius at most~$2$. Let $Z := Z \setminus U$.

Let $z \in Z$ such that $\dist_{G[Z]}(z,u) \leq 2$ for all $u \in Z$.
We branch over all $O(n^d)$ colourings of $\{z\} \cup N_{G[Z]}(z)$.
As $\dist_{G[Z]}(z,u) \leq 2$ for all $u \in Z$, it holds that  $\{z\} \cup N_{G[Z]}(z)$ is a dominating set of $G[Z]$.
Hence, every uncoloured vertex has now at least one newly coloured neighbour. 

We now colour-process the pair of red and blue sets. If there is a vertex with $d+1$ neighbours of the other colour or an uncoloured vertex with more than $d$ neighbours of each colour, then we discard the branch. Otherwise, we redefine $Z$ to be the new set of uncoloured vertices.

\medskip
\noindent
We now proceed by checking if Case~1 applies. If so, we apply the algorithm under Case~1, and else we proceed according to Case~2 again.
Note that if at some point we apply Case~1, then we are done: we either found a red-blue $d$-colouring of $G$, or we have discarded the branch.
We also recall that every time we apply Case~2, all vertices that remain uncoloured will have at least one newly coloured neighbour. This observation is crucial, as it means that
we only need to apply Case 2 at most $2d$ times (should we apply Case~2 at some point $2d$ times,  every uncoloured vertex will have at least $2d$ coloured neighbours and thus, they are coloured after we colour-processed the sets of red and blue vertices).

The correctness of our algorithm follows from its description. We now discuss its running time.
We always have $O(n^d)$ branches at the start of our algorithm when we colour a specific vertex $v$ and its neighbourhood (and possibly one more vertex~$x$). In each of these branches, we may colour-process once, which takes polynomial time by Lemma~\ref{l-process}.
One application of Case 1 gives $O(n^{4d^2})$ branches and one application of Case 2 gives $O(n^{2d^2 + d})$ branches. 
Recall that the algorithm terminates after one application of Case 1 and that we apply Case~2 at most $2d$ times. This means that the total number of branches is  
$$O\left(n^d \cdot \left(n^{2d^2+d}\right)^{^{2d}} \cdot n^{^2}\right),$$ which is a polynomial number and which also bounds the number of times we colour-process in a branch. 
As the latter takes polynomial time by Lemma~\ref{l-process}, we conclude that our algorithm runs in polynomial time.
\qed
\end{proof}

We now consider two classes of $H$-free graphs, starting with the case $H=P_5$.

\begin{theorem}\label{t-p5}
For every $d\geq 2$, the \dcut{} problem is polynomial-time solvable for $P_5$-free graphs.
\end{theorem}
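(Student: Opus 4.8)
The plan is to exploit structure of $P_5$-free graphs, in particular the classical fact that a connected $P_5$-free graph has a dominating clique or a dominating $P_3$ (Bacs\'o--Tuza). Combined with Lemma~\ref{l-smalldom}, a dominating set of bounded size would immediately give a polynomial-time algorithm --- but the obstruction is that a dominating clique can be arbitrarily large, and a large clique is monochromatic in a red-blue $d$-colouring only if it has size at least $2d+1$; a dominating clique of size between $2$ and $2d$ could be split by the cut. So the first step is a case split on the size of the dominating clique $K$. If $|K|\le 2d$, then $|K|$ is bounded by a constant, so $G$ has bounded domination number and Lemma~\ref{l-smalldom} applies directly. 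If $|K|\ge 2d+1$, then $K$ is monochromatic in every red-blue $d$-colouring; without loss of generality colour $K$ red. If instead the dominating set is a $P_3$, its three vertices can be precoloured by branching over $O(1)$ options, again giving bounded domination number --- so the real work is the case of a large monochromatic dominating clique.

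So assume $K$ is a clique with $|K|\ge 2d+1$, coloured red, dominating $G$. Every blue vertex has at most $d$ red neighbours, hence at most $d$ neighbours in $K$. Partition $V\setminus K$ according to how vertices see $K$. Let $B$ be the (unknown) set of blue vertices; each $b\in B$ misses at least $|K|-d\ge d+1$ vertices of $K$, and since those missed vertices of $K$ are red, $b$ already has its red-degree "budget" mostly accounted for within $K$. Now I would argue that the blue set is "small" in a structural sense: consider two blue vertices $b_1,b_2$ that are non-adjacent; together with a vertex of $K$ adjacent to $b_1$ and a vertex of $K$ adjacent to $b_2$ and a further vertex of $K$, $P_5$-freeness forces adjacencies that constrain how the blue part attaches to $K$. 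The key point to extract is that, because $G$ is $P_5$-free and $K$ is a large dominating clique, the graph induced on $V\setminus K$ has a very restricted component/neighbourhood structure relative to $K$ --- concretely, I expect that $V\setminus K$ can be partitioned into a bounded number of "types" by their neighbourhood in $K$ up to the relevant $\le d$ threshold, or that any blue component must be dominated by $O(d)$ vertices of $K$, so we can branch.

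The concrete algorithmic step: branch over which $\le d$ vertices of $K$ a "representative" blue vertex is adjacent to, then use $P_5$-freeness to propagate. More precisely, guess whether a blue vertex exists at all (if not, $G$ has no red-blue $d$-colouring with $K$ red, contradiction, so we move on); pick any potential blue vertex, branch over its $\le d$ red neighbours in $K$ --- that is $O(n^d)$ choices; colour-process using Lemma~\ref{l-process}. After colour-processing, each still-uncoloured vertex has at most $d$ red and at most $d$ blue neighbours, so at most $2d$ coloured neighbours in total; I then want to show that $P_5$-freeness together with the large red clique forces the set of uncoloured vertices to have bounded domination number or to split into components each dominated by boundedly many already-coloured vertices, so that Lemma~\ref{l-smalldom} or a further bounded branching finishes the job. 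This mirrors the iterative colour-processing structure in the proof of Theorem~\ref{t-diam}: each round of branching+colour-processing either terminates a branch or strictly enlarges the coloured set in a controlled way, and only $O(d)$ rounds are needed before everything is coloured.

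\textbf{Main obstacle.} The hard part will be controlling the blue part when $K$ is a large dominating clique: unlike the matching-cut case ($d=1$), a blue vertex may have up to $d$ neighbours in $K$, and there may be many blue vertices with different (though small) neighbourhoods in $K$, so one cannot simply guess the "boundary" in constant time. The crux is a $P_5$-free structural lemma showing that the blue side, or equivalently the uncoloured side after colour-processing, attaches to the coloured part in only polynomially (indeed boundedly) many essentially-different ways, or is dominated by a bounded-size set --- turning the problem back into an application of Lemma~\ref{l-smalldom}. Getting that structural lemma right, and ensuring the branching factor stays $n^{O(d^2)}$ rather than exponential, is where the real care is needed.
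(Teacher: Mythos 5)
There is a genuine gap, and you have in fact located it yourself: everything up to ``colour the large dominating clique monochromatically'' matches the easy part of the paper's argument (small dominating set $\Rightarrow$ Lemma~\ref{l-smalldom}; large dominating clique $\Rightarrow$ monochromatic), but the entire difficulty of the theorem lies in the step you leave as an ``expected'' structural lemma. The concrete versions you hint at do not hold or do not suffice. For instance, take $K$ a large clique with one pendant vertex attached to each clique vertex: this graph is $P_5$-free, $K$ is a dominating clique, and in the natural colouring the opposite-colour side is an independent set of size $|K|$ with unbounded domination number, so ``the blue side is dominated by a bounded-size set'' fails; and while each blue component there is trivially dominated by one coloured vertex, the components cannot be coloured independently because they compete for the budget of at most $d$ opposite-coloured neighbours of each clique vertex. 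Likewise, your single round of ``guess one blue vertex (its $\leq d$ neighbours in $K$ are forced anyway, so this is only an $O(n)$ guess) and colour-process'' propagates almost nothing: one blue vertex triggers no forcing rules, so the uncoloured set afterwards can still be essentially the whole graph, and no bound on its domination number follows.

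The paper's proof needs two ideas that are absent from your plan. First, a ``blue phase'' that iterates the dominating-structure extraction: after colouring the dominating clique $D$ blue and restricting the red vertices to one component $L_1$ of $G-D$, it finds a dominating clique-or-$C_5$ $D_1$ of $L_1$, then a component $L_2$ of $L_1-D_1$, and so on; since an uncoloured vertex with $d+1$ blue dominating sets above it is forced blue, a red vertex must appear in some $D_i$ with $i\leq d-1$, which localizes a red vertex inside a structured set at the cost of only $O(n^d)$ branches. Second, a ``red phase'': when $|D_i|\geq 2d+2$, $D_i$ is a clique and hence entirely red, and the remaining uncoloured vertices are handled by repeatedly picking an uncoloured $w_j$, a red neighbour $x_j\in D_i$, and branching on the neighbourhoods of $x_j$ and of a blue $y_1\in D$; the crucial point is a $P_5$-freeness argument (using a spare vertex $z\in D$ non-adjacent to $w_j,x_j,x_1$, which exists because $|D|\geq 3d+1$ and every relevant vertex has at most $d$ neighbours in $D$) forcing $x_jy_1\in E$ for all $j$, so that $y_1$'s budget bounds the number of such rounds by $d$ and the total branching stays $n^{O(d^2)}$. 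Without something playing the role of these two steps, your outline reduces the problem to exactly the statement you admit you cannot yet prove, so the proposal does not constitute a proof.
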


\begin{proof}
Let $d\geq 2$. Let $G=(V,E)$ be a connected $P_5$-free graph on $n$ vertices. As $G$ is $P_5$-free and connected, $G$ has a dominating set $D$, such that either $D$ induces a cycle on five vertices or $D$ is a clique~\cite{LZ94}. Moreover, we find such a dominating set $D$ in $O(n^3)$  time~\cite{HP10}. If $|D|\leq 3d$, then we apply Lemma~\ref{l-smalldom}. Now assume that $|D|\geq 3d+1\geq 7$, and thus $D$ is a clique.  
If $D=V$, then $G$ has no $d$-cut, as $|D|\geq 3d+1$ and $D$ is a clique. Assume that $D\subsetneq V$, so $G-D$ has at least one connected component. 
Below we explain how to find in polynomial time a $d$-cut of $G$, or to conclude that $G$ does not have a $d$-cut. By Observation~\ref{o-cut-colouring}, we need to decide in polynomial time if $G$ has a red-blue $d$-colouring.

We first enter the {\it blue phase}  of our algorithm. As $|D|\geq 3d+1$ and $D$ is a clique, $D$ is monochromatic in any red-blue $d$-colouring of $G$. Hence, we may colour, without loss of generality, all the vertices of $D$ blue, and we may colour all vertices of every connected component of $G-D$ except one connected component blue as well. We branch over all $O(n)$ options of choosing the connected component $L_1$ of $G-D$ that will contain a red 
vertex.\footnote{For $d=1$, up to now, the same approach is used for $P_5$-free graphs~\cite{Fe23}. But the difference is that for $d=1$, the algorithm and analysis is much shorter: one only has to check, in this stage, if there is a component of $G-D$, whose vertices can all be safely coloured red. Then one either finds a matching cut, or $G$ has no matching cut.} 
For each option, we are going to repeat the process of colouring vertices blue until we colour at least one vertex red. We first explain how this process works if we do not do this last step.

As $L_1$ is connected and $P_5$-free, we find in $O(n^3)$ time (using the algorithm of~\cite{HP10}) a dominating set $D_1$ of $L_1$ that is either a cycle on five vertices or a clique. We colour the vertices of $D_1$ blue. As we have not used the colour red yet, we colour all vertices of every connected component of $L_1-D_1$ except one connected component blue. So, we branch over all $O(n)$ options of choosing the connected component $L_2$ of $L_1-D_1$ that will contain a red vertex. Note that every uncoloured vertex of $G$, which belongs to $L_2$, has both a neighbour in $D$ (as $D$ dominates $G$) and a neighbour in $D_1$ (as $D_1$ dominates $L_1$). We now find a dominating set $D_2$ of $L_2$ and a connected component $L_3$ of $G-D_2$ with a dominating set $D_3$, and so on. See also Figure~\ref{f-p5}.

\begin{figure}[t]
\centering
\scalebox{0.9}{
\begin{tikzpicture}
\def\e{0.05}
\large
\begin{scope}[]

\draw[lightblue,   thick](0,4) rectangle (6 ,5);
\node[](d) at (5.5, 4.5){$D$};
\draw[thick](-0.1 ,0 ) rectangle (3.1, 3.6);
\node[](d) at (2.7,  0.3){$L_1$};

\draw[lightblue,  thick] (3.3, 2.5 ) rectangle (4.3, 3.5 );
\node[] (d) at (4.65, 3){$\dots$};
\draw[lightblue,  thick] (5 , 2.5) rectangle (6, 3.5 );

\draw[lightblue,  thick] (0 , 2.5) rectangle (3, 3.5 );
\node[](d) at (2.5, 3){$D_1$};

\draw[thick](0.1,0.1) rectangle (1.9, 2.1);
\node[](d) at (1.6,  0.4){$L_2$};

\draw[lightblue,  thick] (2.1, 1.5 ) rectangle (2.4, 2);
\node[] (d) at (2.55, 1.75){{\tiny ...}};
\draw[lightblue,  thick] (2.7,  1.5 ) rectangle (3, 2);

\draw[lightblue,  thick] (0.2,  1.4 ) rectangle (1.8, 2);
\node[](d) at (1,  1.7){$D_2$};

\draw[edge](1.5, 4) -- (1.5, 3.6);
\draw[edge](3.8, 4) -- (3.8, 3.5);
\draw[edge](5.5, 4) -- (5.5, 3.5);

\draw[edge](1, 2.5) -- (1, 2.1);
\draw[edge](2.25, 2.5) -- (2.25, 2);
\draw[edge](2.85, 2.5) -- (2.85, 2);

\end{scope}

\begin{scope}[shift = {(7,0)}]
\draw[lightblue,  thick](0,4) rectangle (4 ,5);
\node[] (D) at (0.5, 4.5){$D$};

\draw[thick](1 ,0.7) rectangle (5, 3.6);
\draw[nicered, thick] (1.8,2.4) rectangle (4.9, 3.4);
\node[](d) at (4.6, 2.9){$D_i$};

\node[](d) at (1.3, 1){$L_i$};

\node[bvertex, label= left:$z$](z) at (2,4.5){};
\node[bvertex, label = right: $y_1$](y1) at (3,4.5){};
\node[rvertex, label = left: $x_j$](xj) at (2.5, 2.9){};
\node[rvertex, label = right: $x_1$](x1) at (3.5, 2.9){};
\node[vertex, label = left: $w_j$](wj) at (2.5, 1.5){};
\node[vertex, label = right: $w_1$](w1) at (4, 1.5){};

\draw[tedge](z) -- (y1);
\draw[tedge](y1) -- (x1);
\draw[tedge](xj) -- (x1);
\draw[tedge](xj) -- (wj);
\draw[edge](x1) -- (w1);

\end{scope}
\end{tikzpicture}}
\caption{The graph $G$ in the blue phase (left) and in the red phase: Case 2 (right).}\label{f-p5}
\end{figure}

If we repeat the above process more than $d$ times, we have either coloured every vertex of $G$ blue, or we found $d+1$ pairwise disjoint, blue sets $D,D_1,\ldots,D_d$, such that every uncoloured vertex~$u$ has a neighbour in each of them. The latter implies that $u$ has $d+1$ blue neighbours, so $u$ must be coloured blue as well. Hence, in each branch, we would eventually end up with the situation where all vertices of $G$ will be coloured blue. To prevent this from happening, we must colour, in each branch, at least one vertex of $D_i$ red, for some $1\leq i\leq d-1$. As soon as we do this, we end the blue phase for the branch under consideration, and our algorithm enters the {\it red phase}. 

Each time we have $O(n)$ options to select a connected component $L_i$ and, as argued above, we do this at most $d$ times. Hence, we enter the red phase for $O(n^d)$ branches in total. From now on, we call these branches the {\it main branches} of our algorithm. For a main branch, we say that we {\it quit the blue phase at level~$i$} if we colour at least one vertex of $D_i$ red. If we quit the blue phase for a main branch at level~$i$, for some $1\leq i\leq d-1$, then we have constructed, in polynomial time, pairwise disjoint sets $D$, $D_1,\ldots, D_i$ and graphs $L_1,\ldots,L_i$, such that: 

\begin{itemize}
\item for every $h\in \{1,\ldots,i-1\}$, $L_{h+1}$ is a connected component of $L_h-D_h$;
\item every vertex of $G$ that does not belong to $L_i$ has been coloured blue;
\item for every $h\in \{1,\ldots,i\}$, $D_h$ induces a cycle on five vertices or is a clique;
\item $D$ dominates $G$, so, in particular, $D$ dominates $L_i$; and
\item for every $h\in \{1,\ldots,i\}$, $D_h$ dominates $L_h$.
\end{itemize}

\noindent
 We now prove the following claim, which shows that we branched correctly.

\begin{nclaim}\label{c-cor}
The graph $G$ has a red-blue $d$-colouring that colours every vertex of $D$ blue if and only if we have quit a main branch at level~$i$ for some $1\leq i\leq d-1$, such that $G$ has a red-blue $d$-colouring that colours at least one vertex of $D_i$ red and all vertices not in $L_i$ blue.
\end{nclaim}

\begin{claimproof}
Suppose $G$ has a red-blue $d$-colouring $c$ that colours every vertex of $D$ blue (the reverse implication is immediate). By definition, $c$ has coloured at least one vertex~$u$ in $G-D$ red. As we branched in every possible way, there is a main branch that quits the blue phase at level~$i$, such that $u$ belongs to $L_i$ for some $1\leq i\leq d-1$. We pick a main branch with largest possible~$i$, so $u$ is not in $L_{i+1}$. Hence, $u\in D_i$. We also assume that no vertex~$u'$ that belongs to some $D_h$ with $h<i$ is coloured red by~$c$, as else we could take $u'$ instead of $u$. Hence, $c$ has coloured every vertex in $D_1\cup \ldots \cup D_{i-1}$ (if $i\geq 2$) blue. Therefore, we may assume that every vertex~$v\notin D\cup D_1 \cup \ldots \cup D_{i-1}\cup V(L_i)$ has  been coloured blue by~$c$. If not, may just recolour all such vertices~$v$ blue for the following reasons: $u$ is still red and no neighbour of $v$ is red, as after a possible recolouring all red vertices belong to~$L_i$, while $v$ belongs to a different component of $G-(D\cup D_1\cup \cdots \cup D_{i-1})$ than $L_i$. So, we proved the claim.
\end{claimproof}

\medskip
\noindent
Claim~\ref{c-cor} allows us to do some specific branching once we quit the blue phase for a certain main branch at level~$i$. Namely, all we have to do is to consider all options to colour at least one vertex of $D_i$ red. We call these additional branches {\it side branches}. We distinguish between the following two cases:

\medskip
\noindent
{\bf Case 1.} $|D_i|\leq 2d+1$.\\
We consider each of the at most $2^{2d+1}$ options to colour the vertices of $D_i$ either red or blue, such that at least one vertex of $D_i$ is coloured red. Next, for each  vertex $u\in D_i$, we consider all $O(n^d)$ options to colour at most $d$ of its uncoloured neighbours blue if $u$ is red, or red if $u$ is blue. Note that the total number of side branches is $O(2^{2d+1}n^{d(2d+1)})$. As $D_i$ dominates $L_i$ and the only uncoloured vertices were in $L_i$, we obtained a red-blue colouring~$c$ of $G$. We check in polynomial time if $c$ is a red-blue $d$-colouring of $G$. If so, we stop and return~$c$. If none of the side branches yields a red-blue $d$-colouring of $G$, then by Claim~\ref{c-cor} we can safely discard the main branch under consideration.

\medskip
\noindent
{\bf Case 2.} $|D_i|\geq 2d+2$.\\
Recall that $D_i$ either induces a cycle on five vertices or is a clique.
As $|D_i|\geq 2d+2\geq 6$, we find that $D_i$ is a clique. As $|D_i|\geq 2d+2$, this means that $D_i$ must be monochromatic, and thus every vertex of $D_i$ must be coloured red. We check in polynomial time if $D_i$ contains a vertex with more than $d$ neighbours in $D$ (which are all coloured blue), or if $D$ contains a vertex with more than $d$ neighbours in $D_i$ (which are all coloured red). If so, we may safely discard the main branch under consideration due to Claim~\ref{c-cor}. 

From now on, assume that every vertex in $D_i$ has at most $d$ neighbours in~$D$, and vice versa. By construction, every uncoloured vertex belongs to $L_i-D_i$. Hence, if $V(L_i)=D_i$, we have obtained a red-blue colouring~$c$ of $G$. We check, in polynomial time, if $c$ is also a red-blue $d$-colouring of $G$. If so, we stop and return $c$. Otherwise, we may safely discard the main branch due to Claim~\ref{c-cor}. 

Now assume $V(L_i)\supsetneq D_i$. We colour all vertices in $L_i-D_i$ that are adjacent to at least $d+1$ vertices in $D$ blue; we have no choice as the vertices in~$D$ are all coloured blue. If there are no uncoloured vertices left, we check in polynomial time if the obtained red-blue colouring is a red-blue $d$-colouring of $G$. If so, we stop and return it; else we may safely discard the main branch due to Claim~\ref{c-cor}. 

Assume that we still have uncoloured vertices left. We recall that these vertices belong to $L_i-D_i$, and that by construction they have at most $d$ neighbours in~$D$.
Consider an uncoloured vertex $w_1$. As $D_i$ dominates $L_i$, we find that $w_1$ has a neighbour~$x_1$ in $D_i$. As $D$ dominates $G$, we find that $x_1$ is adjacent to some vertex $y_1\in D$. We consider all $O(n^{2d})$ possible ways to colour the uncoloured neighbours of $x_1$ and $y_1$, such that $x_1$ (which is red)  has at most $d$ blue neighbours, and $y_1$ (which is blue) has at most $d$ red neighbours.
If afterwards there is still an uncoloured vertex $w_2$, then we repeat this process: we choose a neighbour $x_2$ of $w_2$ in $D_i$
(so $x_2$ is coloured red).
We now branch again by colouring the uncoloured neighbours of $x_2$. If we find an uncoloured vertex $w_3$, then we find a neighbour $x_3$ of $w_3$ in $D_i$ 
and so on.  So, we repeat this process until there are no more uncoloured vertices. This gives us $2p$ distinct vertices $w_1,\ldots,w_p$, $x_1,\ldots, x_p$
for some integer~$p\geq 1$,
together with vertex $y_1$.The total number of side branches for the main branch is $O(n^{2pd})$.

We claim that $p\leq d$. For a contradiction, assume that $p\geq d+1\geq 3$. Let $2\leq j\leq p$. As $D_i$ is a clique that contains $x_1$ and $x_j$, we find that $x_1$ and $x_j$ are adjacent. Hence, $G$ contains the $4$-vertex path $w_jx_jx_1y_1$. By construction, $w_j$ was uncoloured after colouring the neighbours of $x_1$ and $y_1$, so $w_j$ is neither adjacent to $x_1$ nor to $y_1$. This means that $w_jx_jx_1y_1$ is an induced $P_4$ if and only if $x_j$ is not adjacent to $y_1$. Recall that $w_j$ and every vertex of $D_i$, so including $x_1$ and $x_j$, has at most $d$ neighbours in $D$. As $|D|\geq 3d+1$, this means that $D$ contains a vertex $z$ that is not adjacent to any of $w_j,x_j,x_1$. As $D$ is a clique, $z$ is adjacent to $y_1$. Hence, $x_j$ must be adjacent to $y_1$, as otherwise $w_jx_jx_1y_1z$ is an induced $P_5$; see also Figure~\ref{f-p5}. The latter is not possible as $G$ is $P_5$-free. We now find that $y_1$ is adjacent to $x_1,\ldots,x_p$, which all belong to $D_i$. As we assumed that $p\geq d+1$ and every vertex of $D$, including $y_1$, is adjacent to at most $d$ vertices of $D_i$, this yields a contradiction. We conclude that $p\leq d$.

As $p\leq d$, the total number of side branches is $O(n^{2d^2})$. Each side branch yields a red-blue colouring of $G$. We check in polynomial time if it is a red-blue $d$-colouring of $G$. If so, then we stop and return it; else we can safely discard the side branch, and eventually the associated main branch, due to Claim~\ref{c-cor}.

\medskip
\noindent
The correctness of our algorithm follows from its description. 
We now analyze its running time.
We started the algorithm with searching for a set~$D$.
 If $D$ has size at most~$3d$, then we applied  Lemma~\ref{l-smalldom}, which takes polynomial time. Otherwise we argue as follows.
The total number of main branches is $O(n^d)$. For each main branch we have either $O(2^{2d+1}n^{d(2d+1)})$ side branches (Case~1) or 
 $O(n^{2d^2})$ side branches (Case~2). 
 Hence, the total number of branches is $O(n^d2^{2d+1}n^{d(2d+1)})$.
 As $d$ is fixed, this number is polynomial. 
 As processing each branch takes polynomial time (including the construction of the $D_i$-sets and $L_i$-graphs), we conclude that our algorithm runs in polynomial time. This completes the proof.
\qed
\end{proof} 

\noindent
We now consider the case $H=P_3+P_4$.

\begin{theorem}\label{t-p4p3}
For every $d\geq 2$, the \dcut{} problem is polynomial-time solvable for $(P_3+P_4)$-free graphs.
\end{theorem}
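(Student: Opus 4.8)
The plan is to follow the same high-level strategy as in the proof of Theorem~\ref{t-p5}: use a structural domination result for $(P_3+P_4)$-free graphs to get a bounded-size dominating structure, reduce to the case where this structure is monochromatic, and then branch carefully. Recall that connected $(P_3+P_4)$-free graphs have a dominating subgraph of bounded complexity — more precisely, a connected $(P_3+P_4)$-free graph $G$ has a dominating clique or a dominating induced subgraph of bounded order (this follows from the general machinery for $\mathcal{P}$-free graphs with $\mathcal{P}$ a linear forest, in the spirit of~\cite{LZ94}; a $(P_3+P_4)$-free graph is in particular $P_8$-free, so it has a connected dominating set of bounded diameter, and one can extract from it a dominating set whose closed neighbourhood structure is controlled). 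First I would invoke such a result to obtain, in polynomial time, a dominating set $D$ with the property that either $|D|$ is bounded by a constant (depending only on the forbidden graph, not on $d$), or $D$ is a clique. In the first sub-case we are not quite done, because the constant may exceed $3d$; but then we can still afford to branch over all colourings of $D$ itself ($2^{|D|}$ options, a constant number) and for each, colour at most $d$ neighbours of each vertex of $D$ the opposite colour — this is exactly Lemma~\ref{l-smalldom} with $g=|D|$ constant — so this case is polynomial.

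The main work is the case where $D$ is a clique of size at least $2d+2$, hence monochromatic in any red-blue $d$-colouring; without loss of generality colour all of $D$ blue. Since $G$ is $(P_3+P_4)$-free, I would first analyse the components of $G-D$: if two distinct components $L, L'$ of $G-D$ each contain an induced $P_3$ (or even a single edge, depending on the bookkeeping), then together with a $P_3$ picked inside the other we would produce a forbidden $P_3+P_4$ or use the fact that $L$ being disjoint from $L'$ in $G-D$ forces a $P_3+P_4$ unless one of them is very small — so at most one component of $G-D$ can be ``large'' and all the others are, say, edgeless or of bounded size. The bounded/edgeless components attached to the all-blue clique $D$ are then almost forced: each such vertex sees $D$ (since $D$ dominates), so if it has $d+1$ neighbours in $D$ it must be blue; otherwise it has at most $d$ neighbours in $D$ and we branch on its colour, but there are only boundedly many such vertices in the small components (and the edgeless-component vertices form an independent set whose colouring we can again control by looking at neighbourhoods of vertices of $D$). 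This leaves a single distinguished component $L$ of $G-D$ that may be large, and we recurse into $L$: $L$ is connected and $(P_3+P_4)$-free, so it again has a dominating clique or a small dominating set, and we iterate the blue phase exactly as in Theorem~\ref{t-p5}, maintaining the invariant that every uncoloured vertex gains a new blue neighbour at each level, so after at most $d$ iterations every remaining vertex is forced blue. To avoid the trivial all-blue colouring we must, at some level $i\le d-1$, colour a vertex of the current dominating set $D_i$ red, entering a red phase; an analogue of Claim~\ref{c-cor} (verbatim the same argument) shows this branching is exhaustive.

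In the red phase, the structure differs from the $P_5$-free case but is handled similarly: once $D_i$ has a red vertex, either $D_i$ is small (bounded, or $\le 2d+1$) and we branch over all its colourings and over $\le d$ opposite-coloured neighbours per vertex as in Case~1 of Theorem~\ref{t-p5}; or $D_i$ is a clique of size $\ge 2d+2$, hence monochromatic, so entirely red. In the latter situation I would bound the number of ``problematic'' uncoloured vertices $w$ of $L_i - D_i$ (those not yet forced blue by having $\ge d+1$ neighbours in $D$): each such $w$ has a neighbour $x\in D_i$ (red) and, picking such pairs greedily as in Theorem~\ref{t-p5}, the $(P_3+P_4)$-freeness together with $|D|\ge 3d+1$ and the fact that $w$'s, $x$'s and the vertices of $D_i$ each have at most $d$ neighbours in $D$ forces, after at most $d$ steps, that all remaining vertices are coloured — the key point being that a long greedy sequence would yield an induced $P_3$ inside $D$ (three non-adjacent-to-the-sequence vertices of the clique $D$ — wait, $D$ is a clique, so instead one uses a $P_4$ living on $w_j x_j x_1$ plus a vertex of $D$, together with a disjoint $P_3$ inside $D$ avoiding all these neighbourhoods, contradicting $(P_3+P_4)$-freeness). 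So the number of side branches stays polynomial, and each branch is checked in polynomial time via Observation~\ref{o-cut-colouring}.

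The hard part will be the structural/combinatorial step in the red phase: pinning down exactly which configuration of an induced $P_3+P_4$ arises from a too-long greedy sequence of pairs $(w_j,x_j)$, given that $D$ is a clique (so the ``far'' part of the forbidden induced subgraph must be a $P_3$ found inside $D$ itself, whose three vertices must simultaneously avoid the neighbourhoods of all the $w_j, x_j$, which is why $|D|\ge 3d+1$ is the right bound), and symmetrically controlling the interaction between the small/edgeless components of $G-D$ and the clique $D$. Making the component-analysis of $G-D$ fully rigorous — in particular verifying that ``at most one large component'' really follows from $(P_3+P_4)$-freeness and quantifying ``large'' correctly — is the other delicate point; everything else is a routine adaptation of the branching bookkeeping already developed for Theorem~\ref{t-p5}.
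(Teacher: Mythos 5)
Your proposal rests on a structural premise that is false. There is no theorem guaranteeing that a connected $(P_3+P_4)$-free graph has a dominating clique or a dominating set of constant size, and in fact such graphs exist with neither: take a vertex $v$, triangles $T_1,\ldots,T_k$ with $T_i=\{a_i,b_i,c_i\}$, and join $v$ to each $a_i$. Every induced $P_4$ in this graph passes through $v$ (the triangles are pairwise anticomplete), and the vertices outside the closed neighbourhood of such a $P_4$ induce a disjoint union of edges and vertices, so the graph is $(P_3+P_4)$-free and connected; yet each $b_i$ is dominated only from within $T_i$, so the domination number is at least $k$, and no clique (the maximal cliques are the $T_i$ and the pairs $\{v,a_i\}$) is dominating. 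The result of~\cite{LZ94} is specific to $P_5$-free graphs, and being $P_8$-free only yields connected dominating sets of bounded \emph{diameter}, which feeds neither Lemma~\ref{l-smalldom} (which needs bounded size) nor the monochromaticity argument (which needs a clique of size at least $2d+1$). Since the entire blue-phase/red-phase machinery of Theorem~\ref{t-p5} is built on repeatedly extracting dominating cliques or constant-size dominating sets, the proposal collapses at its first step; the subsequent claims (``at most one large component of $G-D$'', the greedy argument producing a $P_3$ inside the clique $D$) are likewise unsubstantiated sketches that inherit this problem.

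The paper's proof goes a different way and does not use dominating sets of $G$ at all. It fixes an induced $P_4$, call it $P$ (if none exists, Theorem~\ref{t-p5} applies), and exploits that $G-N[V(P)]$ is $P_3$-free, hence a disjoint union of cliques, each admitting only constantly many relevant colourings (Claim~\ref{c-constant}). It then splits into two regimes: first it decides whether there is a red-blue $d$-colouring with some \emph{monochromatic} induced $P_4$, branching over that $P_4$ and over the $O(n^{4d})$ colourings of its neighbourhood, and then analysing how red vertices of $N(V(P))$ interact with the clique components outside (two cases, using $(P_3+P_4)$-freeness to bound the number of ``blue attachment'' vertices and to decompose the remaining components into independent blocks). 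If that fails, every induced $P_4$ must be bichromatic, which forces both colour classes to induce subgraphs of diameter at most~$2$ (Claim~\ref{c-cor5}), and a second, simpler branching phase handles that case. If you want to salvage your write-up, you would need to abandon the dominating-clique route and work instead with the ``$G-N[V(P)]$ is a union of cliques'' structure, or find a genuinely new argument; as it stands, the key lemma you invoke does not exist.
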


\begin{proof}
Let $d\geq 2$ and recall that throughout the proof we assume that $d$ is a constant.
Let $G$ be a connected $(P_3+P_4)$-free graph on $n$ vertices. We may assume that $G$ contains at least one induced $P_4$, else we apply Theorem~\ref{t-p5}.
Throughout our proof we make repeatedly use of the following claim.

\begin{nclaim}\label{c-constant}
{\it Let $P$ be an induced $P_4$ of $G$. Every connected component $F$ in $G-N[V(P)]$ is a complete graph and has only a constant number (namely at most $2^{2d}$) of distinct red-blue colourings.}
\end{nclaim}

\begin{claimproof}
As $P$ is an induced $P_4$, we find that $G-N[V(P)]$ is $P_3$-free. Hence, every connected component $F$ in $G-N[V(P)]$ is a complete graph. 
As $F$ is a complete graph, $F$ must be monochromatic in any red-blue $d$-colouring of $G$ if $|V(F)| \geq 2d+1$. Hence, $F$ has at most $2^{2d}$ possible red-blue colourings, which is a constant number, as $d$ is constant.
\end{claimproof}

\medskip
\noindent
We now divide the proof into two cases. First, we decide in polynomial time whether there exists a red-blue $d$-colouring of $G$ in which some induced $P_4$ is monochromatic. If we do not find such a red-blue $d$-colouring, then we search in polynomial time for a red-blue $d$-colouring in which every induced $P_4$ is bichromatic. By Observation~\ref{o-cut-colouring} this proves the theorem.
	
	\medskip
	\noindent	
	{\bf Step 1.} Decide if $G$  has a red-blue $d$-colouring, for which some induced $P_4$ of $G$ is monochromatic.
	
\medskip
\noindent
Our algorithm performs Step~1 as follows.	
We branch over all induced $P_4$s of~$G$. As there are at most $O(n^4)$ induced $P_4$s in $G$, this yields $O(n^4)$ branches. For each branch, we do as follows.
Let $P$ be the current induced $P_4$ of $G$, which we assume is monochromatic.
We assume without loss of generality that every vertex of $P$ is blue. 
Let $$N_1= N(V(P))\; \mbox{and}\; N_2= V(G) \setminus N[V(P)].$$ 
By Claim~\ref{c-constant}, every connected component of $G[N_2]$ is a complete graph. 
By definition, every vertex will have at most $d$ neighbours of the other colour in every red-blue $d$-colouring of $G$.
We branch over all $O(n^{4d})$ possible ways to colour the vertices of~$N_1$. 

For each branch we do as follows.
First suppose that  $V(P) \cup N_1$ is monochromatic. As the vertices of $P$ are all blue, this means that all vertices of $N_1$ are blue. We observe that $G$ has a red-blue $d$-colouring in which all vertices of $V(P)\cup N_1$ are blue if and only if there is a connected component $F$ of $G[N_2]$ such that $G[V(P) \cup N_1 \cup V(F)]$ has a red-blue $d$-colouring. The reason for this is that if we find such a connected component $F$ of $G[N_2]$, then we can safely colour the vertices of all other connected components of $G[N_2]$ blue.
For each connected component $F$ of $G[N_2]$ we do as follows.
By Claim~\ref{c-constant}, the number of distinct red-blue $d$-colourings of $F$ is constant. We branch by trying each such colouring. If some colouring of $V(F)$ yields a red-blue $d$-colouring of $G[V(P) \cup N_1 \cup V(F)]$,
then we are done. If after considering all
$O(n)$
 connected components of $G[N_2]$  we have not found a red-blue $d$-colouring, then we discard this branch. 

\medskip
\noindent	
Now assume that $V(P) \cup N_1$~is not monochromatic in our current branch. This means that the set of red vertices in $N_1$, which we denote by $S$, is nonempty.
Observe that $S$ contains at most $4d$ vertices, as $S\subseteq N(V(P))$ and every vertex of $P$ is blue.
We now branch over all $O(n^{4d^2})$ possible colourings of $N(S)\setminus V(P)$.

For each branch we do as follows. We colour-process the pair of current sets of red and blue vertices. Observe that connected components of $G[N_2]$ may now contain red vertices, but if they do, then they contain at least one red vertex that is adjacent to a vertex in $S$, i.e., to a red vertex in  $N_1$.  We now make a crucial update:

\medskip
\noindent
{\it We move every new blue vertex from $N_2$ to $N_1$, while we let every new red vertex stay in $N_2$ (so the only red vertices in $N_1$ are still those that belong to $S$).}

\medskip
\noindent
We call this update an {\it $N_1$-update}. Note that after an $N_1$-update, the new connected components of $G[N_2]$ are still complete graphs, but now they consist of only red vertices and uncoloured vertices.
Moreover, every uncoloured vertex belongs to $N_2$ and only has blue neighbours in $N_1$ (as the red vertices in $N_1$ belong to $S$ and all vertices of $N(S)$ are coloured).

For every connected component $F$ of $G[N_2]$ that consists of uncoloured vertices only we do as follows. 
Recall that uncoloured vertices in $N_2$ only have blue neighbours in~$N_1$. Thus, we may safely colour all vertices of $F$ blue and apply an $N_1$-update. So, after this procedure, every connected component~$F$ of $G[N_2]$ with an uncoloured vertex contains a red vertex. Recall that at least one red vertex of $F$ must have a red neighbour in~$S\subseteq N_1$,

For every connected component~$F$ of $G[N_2]$ that has an uncoloured vertex and that has size at least $2d+1$, we do as follows. As $F$ is a complete graph on at least $2d+1$ vertices, $F$ is monochromatic in every red-blue $d$-colouring of~$G$.  From the above, we know that $F$ also contains a red vertex. Hence, we must colour every uncoloured vertex of $F$ red. Afterwards, all connected components of $G[N_2]$ with an uncoloured vertex have size at most $2d$.

For every connected component $F$ of $G[N_2]$ with an uncoloured vertex we also check if all the uncoloured vertices of $F$ only have red or uncoloured neighbours (note that the latter belong to $F$ as well). If so, then we safely colour all uncoloured vertices of $F$ red. If afterwards there are no uncoloured vertices anymore, we check if the resulting colouring is a red-blue $d$-colouring of $G$. If so, we are done, and otherwise we discard the branch. 
Suppose there are still uncoloured vertices left. Then these uncoloured vertices must belong to $N_2$, and we denote the connected components of $G[N_2]$ that contain at least one uncoloured vertex by $F_1,\ldots, F_q$ for some $q\geq 1$. In summary, we have proven:

\begin{nclaim}\label{c-cor3}
{\it Every $F_i$ is a \emph{complete} graph of size at most $2d$ that\\[-20pt]
\begin{itemize}
\item [(i)] contains a red vertex with a red neighbour in $S\subseteq N_1$;
\item [(ii)] contains an uncoloured vertex with a blue neighbour in $N_1$; 
\item [(iii)] does not contain any blue vertices; and
\item [(iv)] does not contain any uncoloured vertices that have a red neighbour in $N_1$.
\end{itemize}}
\end{nclaim}

\noindent
By Claim~\ref{c-cor3}, there exists at least one red vertex in $N_1$ that has a red neighbour in some connected component $F_i$. We now distinguish between two cases.

\medskip
\noindent
{\bf Case~1.} There exists a red vertex $r \in N_1$ that has a red neighbour in two distinct connected components $F_i$ and $F_j$, say in $F_1$ and $F_2$.

\medskip
\noindent
Let $r_1 \in V(F_1)$ be a red neighbour of $r$ in $F_1$, and let $u\in V(F_1)$ be an uncoloured vertex in $F_1$. 
Similarly, let $r_2 \in V(F_2)$ be a red neighbour of $r$ in $F_2$.  
By Claim~\ref{c-cor3}, we have that $F_1$ is a complete graph, so $r_1$ and $u$ are adjacent, and thus $G$ contains the $4$-vertex path $Q=u_1r_1rr_2$.

\begin{figure}[t]
	\centering

\begin{tikzpicture}
\begin{scope}
\node[bvertex](p1) at (1,4){};
\node[bvertex](p2) at (2,4){};
\node[bvertex](p3) at (3,4){};
\node[bvertex](p4) at (4,4){};

\draw[edge](p1) -- (p2);
\draw[edge](p2) -- (p3);
\draw[edge](p3) -- (p4);

\draw[nicered, thick] (0,2.5) rectangle (2,3.5);
\draw[lightblue, thick] (2.5,2.5) rectangle (5,3.5);
\node[] (n1) at (-0.3, 3){$N_1$};
\node[] (n2) at (-0.3, 1){$N_2$};
\node[] (p4) at (-0.3,4) {$P$};

\draw[] (0,0)			 rectangle	(1,2);
\draw[] (1 + 1/3,0) 	rectangle	(2 + 1/3,2);
\draw[] (2 + 2/3,0)	 rectangle	(3 + 2/3,2);
\draw[] (4,0) 			rectangle	(5,2);

\node[] (f1) at (0.5, -0.3){$F_1$};
\node[] (f1) at (1.5 + 1/3, -0.3){$F_2$};
\node[] (f1) at (2.5 + 2/3, -0.3){$F_3$};
\node[] (f1) at (4.5, -0.3){$F_4$};

\node[rvertex, label = above:$r$] (r) at (1 + 1/6, 2.8){};
\node[rvertex, label= above left:$r_1$](r1) at (0.5, 1.5){};
\node[rvertex, label= above right:$r_2$](r2) at (1.5+ 1/3, 1.5){};
\node[evertex, label = below:$u$] (u) at (0.5,0.5){};

\draw[tedge] (u) -- (r1);
\draw[tedge] (r1) -- (r);
\draw[tedge] (r) -- (r2);

\node[bvertex] (b) at (3+ 5/6, 2.8){};
\node[evertex] (b1) at (2.5 + 2/3, 1.5){};
\node[evertex] (b2) at (4.5, 1.5){};

\draw[tedge](b1) -- (b);
\draw[tedge](b) -- (b2);
\end{scope}

\end{tikzpicture}
	\caption{Illustration of Case 1. As $Q=u_1r_1rr_2$ is induced, the displayed $P_3+P_4$ cannot be induced. White vertices indicate uncoloured vertices.}\label{f-p3p4-case1}
	\end{figure}

As $r$ is red, Claim~\ref{c-cor3} tells us that $u$ is not adjacent to $r$.
As $u$ and $r_2$ are in different connected components of $G[N_2]$, we also have that $u$ is not adjacent to $r_2$. For the same reason, $r_1$ and $r_2$ are not adjacent.
Hence, $Q$ is in fact an induced~$P_4$ of $G$. 

As $G$ is $(P_3+P_4)$-free and $Q$ is an induced $P_4$, any induced $P_3$ in $G$ contains a vertex that is adjacent to at least one vertex of $Q$.  In particular, this implies that every blue vertex in $N_1$ that has some uncoloured neighbour in at least two connected components of $\{F_3,\ldots,F_q\}$ (if $q\geq 4$)
has a neighbour in $Q$. The reason is that these two uncoloured neighbours cannot be adjacent to red vertices in $N_1$ (by Claim~\ref{c-cor3}) or to vertices in connected components of $G[N_2]$ to which they do not belong.
See Figure~\ref{f-p3p4-case1} 
for an illustration.

Since the vertices of $Q$ are either red or uncoloured, they may have at most $d$ blue neighbours (as else they would have been coloured blue due to the colour-processing). Hence, we find that there are at most $4d$  blue vertices in $N_1$ with uncoloured neighbours in at least two connected components of $\{F_3,\ldots,F_q\}$. We denote this particular set of blue vertices in $N_1$ by $T$, so we have that $|T| \leq 4d$.
	
By Claim~\ref{c-cor3}, every $F_i$ has at most $2d$ vertices (so a constant number). Hence, $|V(F_1)|\leq 2d$ and $|V(F_2)|\leq 2d$. 
Let $U$ be the set of uncoloured vertices in $V(F_1)\cup V(F_2)$.
We branch over all $O(2^{4d}n^{4d^2})$ 
colourings of $G[N(T) \cup U]$. 

For each branch we do as follows.
We first observe that every uncoloured vertex $x$ must belong to $V(F_i)$ for some $ i \in \{3, \dots, q\}$. 
The neighbours of $x$ in~$N_1$ cannot belong to $S\cup T$, as we already coloured all vertices of $N(S\cup T)$. Hence, any neighbour $y$ of $x$ in $N_1$ must be in $N_1\setminus (S\cup T)$, so in particular $y\notin S$ and thus $y$ must be blue.
Moreover, as $y$ does not belong to $T$ either,  any other uncoloured neighbours of $y$ must all belong to $F_i$ as well (this follows from the definition of $T$).

Let $W$ consist of all vertices coloured so far.
From the above, we conclude that it is enough to decide if we can extend  for each $i\in \{3,\ldots,q\}$, the current red-blue colouring of $G[W]$ to a red-blue $d$-colouring of $G[W\cup V(F_i)]$. For each $i\in \{3,\ldots,q\}$, this leads to another $2^{2d}$ branches, as $F_i$ has size at most $2d$ by Claim~\ref{c-cor3}.
So, as we can consider the colourings of every $F_i$ ($i\geq 3$) independently, the total number of new branches is $2^{2d}q=O(n)$.
If for all $i\in \{3,\ldots,q\}$ we can find an extended red-blue $d$-colouring of $G[W\cup V(F_i)]$, then we have obtained a red-blue $d$-colouring of $G$. 
Otherwise we discard the branch.

\medskip
\noindent
{\bf Case~2.} Every red vertex in $N_1$ has red neighbours in at most one connected component of $\{F_1,\ldots,F_q\}$.

\medskip
\noindent
By definition, every $F_i$ has at least one uncoloured vertex. By Claim~\ref{c-cor3}, every $F_i$ also has at least one red vertex with a red neighbour in $N_1$.
Consider~$F_1$. Let $x$ be an uncoloured vertex of $F_1$, and let $r_1$ be a red vertex of $F_1$ with a red neighbour $r$ in $N_1$, so $r\in S$.
Recall that $F_1$ is a complete graph by Claim~\ref{c-cor3}. Hence, $x$ and $r_1$ are adjacent, and thus $xr_1r$ is a $P_3$.
However, $x$ and $r$ are non-adjacent, since $r \in S$ and the neighbourhood of $S$ is coloured. 
This means that  $J$ is even an induced~$P_3$ in $G$.

By definition of a red-blue $d$-colouring and due to the colour-processing, at most $3d$ blue vertices in $G$ can have a neighbour on $J$. Let $\Gamma$ be the set of these blue vertices, so $|\Gamma|\leq 3d$. 
We now branch by considering all colourings of the uncoloured vertices of $N(\Gamma)\cup V(F_1)$. As $|\Gamma|\leq 3d$ and
$F_1$ has at most $2d$ vertices by Claim~\ref{c-cor3}, the number of colourings to consider is at most $O(2^{2d}n^{3d^2})$.
	 
For each branch we do as follows. We first colour-process the pair of sets of red vertices and blue vertices.
We then perform an $N_1$-update, such that afterwards every $F_i$ again only contains red and uncoloured vertices. Again we colour the uncoloured vertices of some $F_i$ red if none of them has a blue neighbour in $N_1$. 
Moreover, we recall that if some $F_i$ consists of only uncoloured vertices, then -- as  uncoloured vertices in $N_2$ only have blue neighbours in~$N_1$ -- we may safely colour all vertices of $F_i$ blue and apply an $N_1$-update. So, again it holds that every $F_i$ with an uncoloured vertex contains a red vertex.

Assume we still have at least one uncoloured vertex left (otherwise we are done). As we coloured the vertices of $F_1$, we find that there exists some $F_i$ with $i\geq 2$, say $F_2$, that contains at least one uncoloured vertex. As we did not colour all vertices of $F_2$ red, we find that $F_2$ contains an uncoloured vertex $u$ with a blue neighbour~$b$ in $N_1$.
We note that $b$ does not belong to $\Gamma$, as we coloured all vertices of~$N(\Gamma)$.

\begin{figure}[t]
	\centering
\begin{tikzpicture}

\begin{scope}[shift = {(0,0)}]
\node[bvertex](p1) at (1,4){};
\node[bvertex](p2) at (2,4){};
\node[bvertex](p3) at (3,4){};
\node[bvertex](p4) at (4,4){};

\draw[edge](p1) -- (p2);
\draw[edge](p2) -- (p3);
\draw[edge](p3) -- (p4);

\draw[nicered, thick] (0,2.5) rectangle (2,3.5);
\draw[lightblue, thick] (2.5,2.5) rectangle (5,3.5);
\node[] (n1) at (-0.3, 3){$N_1$};
\node[] (n2) at (-0.3, 1){$N_2$};
\node[] (p4) at (-0.3,4) {$P$};

\draw[] (0,0)			 rectangle	(1.5,2);
\draw[] (1.75,0) 	rectangle	(3.25,2);
\draw[] (3.5,0)	 rectangle	(5,2);

\node[] (f1) at (0.75, -0.3){$F_1$};
\node[] (f1) at (2.5 , -0.3){$F_2$};
\node[] (f1) at (4.25, -0.3){$F_3$};

\node[rvertex, label = above :$r$] (r) at (1 + 1/6, 2.8){};
\node[rvertex, label=  left:$r_1$](r1) at (0.6, 1.5){};
\node[evertex, label = below:$x$] (x) at (0.6,0.5){};

\draw[tedge] (x) -- (r1);
\draw[tedge] (r1) -- (r);
 
\node[bvertex, label = above:$b$] (b) at (3.7, 2.8){};
\node[evertex, label= below:$u$] (b1) at (2.8, 1.5){};
\node[evertex, label = below: $v$] (b2) at (4.25, 1.5){};
\node[rvertex, label= below:$r_2$] (r2) at (2.1,1.5){};

\draw[tedge](b1) -- (b);
\draw[tedge](b) -- (b2);
\draw[tedge](b1) -- (r2);
\end{scope}

\end{tikzpicture}
	\caption{Illustration of Case 2. As $J = rr_1u$ is induced, the displayed $P_3+P_4$ cannot be induced. White vertices indicate uncoloured vertices.}\label{f-p3p4-case2}
	\end{figure}

First suppose that $b$ has an uncoloured neighbour $v$ in some $F_i$ with $i\geq 3$, say in~$F_3$, and also that $b$ also has a non-neighbour $r_2$ 
(that is either red or uncoloured) 
in $F_2$ or $F_3$, say in~$F_2$. As $F_2$ is a complete graph, $u$ and $r_2$ are adjacent. Hence, $r_2ubv$ is an induced $P_4$ of $G$.

We note that $b$ is not adjacent to any vertex of $\{r,r_1,x\}$, as $b$ does not belong to $\Gamma$. Note also that $u$ and $v$ are not adjacent to $r_1$ and $x$, as $r_1$ and $x$ are in a different connected component of $G[N_2]$ than $u$ and $v$. Moreover, as $r$ is red, Claim~\ref{c-cor3} tells us that $u$ and $v$ are not adjacent to $r$ either. 

Finally, we consider~$r_2$. We note that $r_2$ is not adjacent to~$r$. Namely, if $r_2$ is red, then this holds by the Case~2 assumption. If $r_2$ is uncoloured, then this holds due to Claim~\ref{c-cor3}. Moreover,  $r_2$ is not adjacent to $r_1$ and $x$, as $r_1$ and~$x$ belong to a different connected component of $G[N_2]$ than $r_2$.
 However, we now find an induced $P_3+P_4$ of $G$ consisting of  $\{r,r_1,x\}$ and $\{r_2,u,b,v\}$, see Figure~\ref{f-p3p4-case2}. As $G$ is $(P_3+P_4)$-free, this is not possible. 
 
 From the above we conclude that the following holds for every blue vertex $b''\in N_1$ that has an uncoloured neighbour:
 
 \begin{itemize}
 \item [(i)]   vertex $b'$ has only uncoloured neighbours in at most one $F_i$, or\\[-10pt]
 \item [(ii)]  vertex $b'$ is adjacent to every 
 vertex of 
 every~$F_i$, in which $b'$ has an uncoloured neighbour.
 \end{itemize}
 
 \noindent
 If case (i) holds, then we say that $b$ is of {\it type}~(i). If case (ii) holds, then we say that $b$ is of {\it type}~(ii). 
 
Recall that any uncoloured vertex, which must belong to $N_2$, only has blue neighbours in $N_1$ by Claim~\ref{c-cor3}. 
We say that a connected component $F_i$  with an uncoloured vertex is an {\it individual} component of $G[N_2]$ if the uncoloured vertices of $F_i$ have only blue neighbours of type~(i) in $N_1$; else we say that $F_i$ is a {\it collective} component of $G[N_2]$.

Suppose $b$ is a blue vertex of $N_1$ that has an uncoloured neighbour and that is of type~(ii). As $b$ is blue, $b$ is has at most $d-1$ red neighbours, else its remaining neighbours would have been coloured blue by the colour-processing, and hence, $b$ would not have an uncoloured neighbour. As $b$ is of type~(ii), it follows that $b$ is adjacent to every vertex of every $F_i$ that contains an uncoloured neighbour of~$b$. Recall that every $F_i$ that has an uncoloured vertex also contains a red vertex. Hence, we find that $b$ has uncoloured neighbours in at most $d-1$ collective components.  As $d\geq 2$, the same statement also holds if $b$ is of type~(i). Hence, we have proven the following claim:
 
 \begin{nclaim}\label{c-cor4}
{\it Every blue vertex in $N_1$ has uncoloured neighbours in at most $d-1$ collective components.}
\end{nclaim}

\begin{figure}[t]
	\centering
\begin{tikzpicture}

\begin{scope}[shift = {(6.3,0)}]
\node[bvertex](p1) at (1,4){};
\node[bvertex](p2) at (2,4){};
\node[bvertex](p3) at (3,4){};
\node[bvertex](p4) at (4,4){};

\draw[edge](p1) -- (p2);
\draw[edge](p2) -- (p3);
\draw[edge](p3) -- (p4);

\draw[nicered, thick] (0,2.5) rectangle (2,3.5);
\draw[lightblue, thick] (2.5,2.5) rectangle (5,3.5);
\node[] (n1) at (-0.3, 3){$N_1$};
\node[] (n2) at (-0.3, 1){$N_2$};
\node[] (p4) at (-0.3,4) {$P$};

\draw[] (0,0)			 rectangle	(1,2);
\draw[] (1 + 1/3,0) 	rectangle	(2 + 1/3,2);
\draw[] (2 + 2/3,0)	 rectangle	(3 + 2/3,2);
\draw[] (4,0) 			rectangle	(5,2);

\node[] (f1) at (0.5, -0.3){$F_1$};
\node[] (f1) at (1.5 + 1/3, -0.3){$F_h$};
\node[] (f1) at (2.5 + 2/3, -0.3){$F_i$};
\node[] (f1) at (4.5, -0.3){$F_j$};

\node[rvertex, label = above :$r$] (r) at (1 + 1/6, 2.8){};
\node[rvertex, label=  left:$r_1$](r1) at (0.6, 1.5){};
\node[evertex, label = below:$x$] (x) at (0.6,0.5){};

\draw[tedge] (x) -- (r1);
\draw[tedge] (r1) -- (r);
 
\node[bvertex, label = above:$b_1$] (b1) at (3, 2.8){};
\node[bvertex, label = above:$b_2$] (b2) at (4.3, 2.8){};
\node[evertex, label= below:$u$] (u) at (1.5+1/3, 1.5){};
\node[evertex, label = below: $v$] (v) at (2.5 + 2/3, 1.5){};
\node[evertex, label= below:$w$] (w) at (4.5,1.5){};

\draw[tedge](b1) -- (u);
\draw[tedge](b1) -- (v);
\draw[tedge](b2) -- (v);
\draw[tedge](b2) -- (w);
\end{scope}

\end{tikzpicture}
	\caption{Illustration of Case 2  where we show that two vertices $b_1$ and $b_2$ of type~(ii) must have either disjoint neighbourhoods in~$N_2$ or one neighbourhood in $N_2$ must be contained in the other. White vertices indicate uncoloured vertices.}\label{f-p3p4-case22}
	\end{figure}

\noindent
Now suppose there exist two blue vertices $b_1$ and $b_2$ in $N_1$ that are both of type~(ii) and connected components $F_h$, $F_i$ and $F_j$ with $h,i,j\geq 2$ such that:

\begin{itemize}
\item $b_1$ has an uncoloured neighbour $u$ in $F_h$, whereas $b_2$ has no neighbour in $F_h$;
\item $b_1$ and $b_2$ both have an uncoloured neighbour in $F_i$, which we may assume to be the same neighbour as $b_1$ and $b_2$ are of type~(ii); we denote this common uncoloured neighbour by $v$;
\item $b_2$ has an uncoloured neighbour $w$ in $F_j$, whereas $b_1$ has no neighbour in $F_j$.
\end{itemize}

\noindent
See Figure~\ref{f-p3p4-case22} for an illustration. Recall that $b_1$ and $b_2$ do not belong to $\Gamma$, as else they would not have an uncoloured neighbour. This implies the following. 
If $b_1$ and $b_2$ are not adjacent, then $J$, together with $\{u,b_1,v,b_2\}$, induces a $P_3+P_4$ in $G$. 
If $b_1$ and $b_2$ are adjacent, then $J$, together with $\{u,b_1,b_2,w\}$, induces a $P_3+P_4$ in $G$. Both cases are not possible due to $(P_3+P_4)$-freeness of $G$. Hence, we conclude that such vertices $b_1$ and $b_2,$ and such connected components $F_h$, $F_i$, $F_j$ do not exist.

We assume that without loss of generality the connected components with an uncoloured vertex are $F_2,\ldots,F_r$ for some $r\geq 2$. 
Suppose $G[N_2]$ has collective components. Then, due to the above and Claim~\ref{c-cor4}, we can now partition the collective components of $\{F_2,\ldots,F_r\}$ in blocks $D_1,\ldots,D_p$ for some $p\geq 1$,  such that the following holds:

\begin{itemize}
\item for $i=1,\ldots, p$, there exists a blue vertex $b_i$ in $N_1$ that is adjacent to all vertices of every collective component of $D_i$;
\item for $i=1,\ldots, p$, every $D_i$ contains at most $d-1$ collective components, each of which contain at most~$2d$ vertices by Claim~\ref{c-cor3}; and
\item every blue vertex in $N_1$ is adjacent to vertices of collective components of at most one $D_i$.
\end{itemize}

\noindent
Let $W$ be the set of vertices of $G$ that have already been coloured. Let ${\cal D}=\{D_1,\ldots,D_p\}$. We add every individual component $F$ that does not belong to some block in ${\cal D}$ to ${\cal D}$ as a block $\{F\}$.
Note that by definition, every blue vertex in $N_1$ that is of type~(i) has uncoloured neighbours in only one individual component. 
Hence, due to the above, $G$ has a red-blue $d$-colouring that extends the colouring of $G[W]$ if and only if for every $D\in {\cal D}$ it holds that $G[W\cup \bigcup_{F\in D}V(F)]$ has a red-blue $d$-colouring.
Therefore, we can consider each $D\in {\cal D}$ separately as follows. We consider 
all  colourings of the uncoloured vertices in $G[V(D_i)]$ and for each of these colourings we check if we obtained a red-blue $d$-colouring of $G[W\cup \bigcup_{F\in D}V(F)]$.
As $D_i$ contains at most $d-1$ components, each with at most $2d$ vertices, the number of these colourings is at most $2^{2d(d-1)}$.
Note that there are at most $p\leq n$ number of blocks that we must consider.

If the above leads to a red-blue $d$-colouring of $G$, then we are done. Else, we discard the branch.

\medskip
\noindent
Suppose we have not found a red-blue $d$-colouring of $G$ after processing all the branches. If $G$ is still a yes-instance, then in any red-blue $d$-colouring of $G$, every induced $P_4$ of $G$ is bichromatic. Therefore, our algorithm will perform the following step:

\medskip
\noindent
{\bf Step 2.} Decide if $G$ has a red-blue $d$-colouring, for which every induced $P_4$ of $G$ is bichromatic.

\medskip
\noindent	
We first prove a structural claim. Assume $G$ has a red-blue $d$-colouring. Let $B$ be the set of blue vertices. Let $R$ be the set of red vertices. Assume that $G[B]$ has connected components $D^B_1,\ldots, D^B_p$ for some $p\geq 1$ and that $G[R]$ has connected components $D^R_1,\ldots, D^R_q$ for some $q\geq 1$. If $q\geq 2$, then we obtain another red-blue $d$-colouring of $G$ by changing the colour of every vertex in every $D^R_h$ with $h\in \{2,\ldots,r\}$ from red to blue. Hence, we may assume $q=1$. Suppose $p\geq 2$. As $G$ is connected, every $D^B_i$ contains a vertex that is adjacent to a vertex in $D^R_1$. Hence, we obtain another red-blue $d$-colouring of $G$ by changing the colour of every vertex in every $D^B_i$ with $i\in \{2,\ldots,p\}$ from blue to red. So we may also assume $p=1$. 
Recall that we have ruled out the case where $G$ has a red-blue $d$-colouring in which some induced $P_4$ of $G$ is monochromatic. 
This means that both $G[B]$ and $G[R]$  have diameter at most $2$, else we would have a monochromatic~$P_4$. So we have shown the following claim:

 \begin{nclaim}\label{c-cor5}
{\it  If $G$ has a red-blue $d$-colouring, then $G$ has a red-blue $d$-colouring in which the sets of blue and red vertices, respectively, each induce subgraphs of $G$ that have diameter at most~$2$.}
\end{nclaim}

\noindent
By Claim~\ref{c-cor5}, we will now search for a red-blue $d$-colouring of $G$ in which the sets of blue and red vertices, respectively, each induce subgraphs of $G$ that have diameter at most~$2$.
We say that such a red-blue $d$-colouring has {\it diameter at most~$2$}.

Recall that $G$ is not $P_4$-free. Hence, we can find an induced path $P$ on four vertices in $O(n^4)$ time by brute force. We now branch over all $O(n^{4d})$ colourings of $N[V(P)]$. Let $N_2=V(G) \setminus N[V(P)]$.
If $N_2 = \emptyset$, then we check if we obtained a red-blue $d$-colouring of $G$. If so, then we are done, and else we discard the branch. 

Suppose $N_2\neq \emptyset$.
As $G$ is $(P_3+P_4)$-free, every connected component of $G[N_2]$ is a complete graph.
Let $F_1, \dots, F_k$, for some integer $k \geq 1$, be the connected components of $G[N_2]$.
We colour-process the pair consisting of the current sets of blue and red vertices.

If all vertices are coloured, then we check if we obtained a red-blue $d$-colouring of $G$. If so, then we are done, and else we discard the branch.
Suppose not every vertex is coloured. This means there exists some $F_i$, say $F_1$, with an uncoloured vertex~$x$.
As we colour-processed and $x$ has not been coloured, we find that  $x$ has at most $d$ red neighbours and at most $d$ blue neighbours. Let $B_x \subseteq N_1$ be the set of blue neighbours of $x$ in $N_1$ and $R_x \subseteq N_1$ be the set of red neighbours of $x$ in $N_1$. 
By Claim~\ref{c-constant}, the number of red-blue colourings of the uncoloured vertices of $F_1$ is constant (at most $2^{2d}$). Let $U$ be the set of uncoloured vertices of $F$.
We branch over all 
$O(n^{2d^2})$ colourings of $G[N(B_x \cup R_x) \cup U]$. 

We now colour the remaining uncoloured vertices, which must all belong to $N_2\setminus V(F_1)$ as follows. If $x$ is blue, then 
 any blue vertex of $N_2\setminus V(F_1)$ must have a neighbour in $B_x$ in any red-blue $d$-colouring of $G$ that has diameter at most~$2$ and that is an extension of the current red-blue colouring.  As we already coloured all vertices in $N(B_x)$, this means that we must colour any uncoloured vertex of $N_2$ red. Similarly, if $x$ is red, then we must colour any uncoloured vertex of $N_2$ blue. We check if the resulting colouring is a red-blue $d$-colouring of $G$. If yes, we have found a solution, otherwise we discard the branch. 

 In the end we either found a red-blue $d$-colouring of $G$, or we have discarded every branch. In that case, the algorithm returns that $G$ has no red-blue $d$-colouring. This completes the description of our algorithm.
  
 \medskip
 \noindent
 The correctness of the algorithm follows from its description. The maximum number of branches in Step~1 is 
  \[O\left(n^4 \cdot n^{4d} \cdot \left( n + n^{4d^2} \cdot \left( n^{4d^2} \cdot n + n^{3d^2}\cdot n\right) \right) \right),\] 
 and in Step~2, it is $O(n^4\cdot n^{4d} \cdot n^{2d^2})$.
 So, in total we have at most
 $n^{O(d^3)}$ branches and each branch can be processed in polynomial time   
  (in particular, colour-processing takes polynomial time due to Lemma~\ref{l-process}; $N_1$-updates can be done in polynomial time; and we can also check in polynomial time if a red-blue colouring of $G$ is a red-blue $d$-colouring). Thus, the running time of our algorithm is polynomial. 
	 \qed
\end{proof}

\noindent
As our final result in this section, we prove the following.

\begin{theorem}\label{t-p1}
For every graph $H$ and every $d\geq 2$, if \dcut{} is polynomial-time solvable for $H$-free graphs, then it is so for $(H+P_1)$-free graphs.
\end{theorem}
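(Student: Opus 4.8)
The plan is to observe that a connected $(H+P_1)$-free graph is either $H$-free — in which case we invoke the hypothesised algorithm — or else contains an induced copy of $H$ whose vertex set is necessarily a dominating set of constant size, in which case Lemma~\ref{l-smalldom} finishes the job. Write $h=|V(H)|$; this is a constant, as is $d$. Let $G$ be the connected $(H+P_1)$-free input graph on $n$ vertices; by Observation~\ref{o-cut-colouring} it suffices to test, in polynomial time, whether $G$ admits a red-blue $d$-colouring.

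First I would enumerate all $O(n^{h})$ vertex subsets of size $h$ and check whether any of them induces a subgraph isomorphic to $H$; since $h$ is a constant, each individual check, and hence the whole enumeration, takes polynomial time. If no such subset exists, then $G$ is $H$-free and we simply apply the assumed polynomial-time algorithm for $H$-free graphs. Otherwise, fix a set $A\subseteq V(G)$ with $|A|=h$ such that $G[A]$ is isomorphic to $H$. The one substantive step is the claim that $A$ dominates $G$: if some vertex $v\in V(G)\setminus A$ had no neighbour in $A$, then $G[A\cup\{v\}]$ would be isomorphic to $H+P_1$, contradicting $(H+P_1)$-freeness. Hence the domination number of $G$ is at most $h$, and Lemma~\ref{l-smalldom}, applied with $g:=h$, decides in $O(2^{h}n^{dh+2})$ time whether $G$ has a red-blue $d$-colouring. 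This bound is polynomial because $d$ and $h$ are fixed, so combining the two cases proves the theorem.

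I do not expect a genuine obstacle here: the argument is short, and its only non-routine ingredient is the domination observation above, which is immediate from the definition of $(H+P_1)$-freeness. I would additionally remark that the argument is oblivious to the value of $d$, so the statement in fact also holds for $d=1$; it is phrased for $d\geq 2$ only to match the scope of the paper, where the $d=1$ case is already subsumed by the stronger $(H+P_3)$-free result of~\cite{LPR23a}.
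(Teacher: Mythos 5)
Your proposal is correct and follows essentially the same route as the paper: if $G$ is $H$-free, use the assumed algorithm; otherwise an induced copy of $H$ must dominate $G$ by $(H+P_1)$-freeness, so Lemma~\ref{l-smalldom} applies with constant domination number $|V(H)|$. Your write-up just spells out the enumeration of the $O(n^{|V(H)|})$ candidate sets and the domination argument, which the paper leaves implicit.
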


\begin{proof}
Suppose that \dcut{} is polynomial-time solvable for $H$-free graphs. Let $G$ be a connected $(H+P_1)$-free graph. If $G$ is $H$-free, the result follows by assumption. If $G$ is not $H$-free, then $V(G)$ contains a set $U$ such that $G[U]$ is isomorphic to $H$. As $G$ is $(H+P_1)$-free, $U$ dominates $G$. As $H$ is fixed, $|U|=|V(H)|$ is a constant. Hence, in this case we can apply Lemma~\ref{l-smalldom}. \qed
\end{proof}

\section{NP-Completeness Results}\label{s-np}

In this section we show our \NP-completeness results for \dcut{} for $d\geq 2$.
As \dcut{} is readily seen to be in \NP\ for 
each
$d\geq 1$, we only show \NP-hardness in our proofs. 

We first focus on the case where $d\geq 3$. For this, we need some additional terminology.
 An edge colouring of a graph $G=(V,E)$ with colours red and blue is called a \emph{red-blue edge colouring} of $G$, which is 
a \emph{red-blue edge $d$-colouring} of $G$ if every edge of $G$ is adjacent to at most $d$ edges of the other colour and both colours are used at least once.
Now,  $G$ has a red-blue edge $d$-colouring if and only if $L(G)$ has a red-blue $d$-colouring.
A set $S\subseteq V$ is \emph{monochromatic} if all edges of $G[S]$ are coloured alike.

\begin{theorem}\label{t-line}
For every $d\geq 3$, the \dcut{} problem is \NP-complete for line graphs.
\end{theorem}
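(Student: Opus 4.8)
The plan is to reduce from \dcut{} on general graphs, which is \NP-complete for every $d\geq 1$ by Gomes and Sau~\cite{GS21}, or alternatively from the \NP-complete variant on bounded-degree graphs. Given a connected graph $G$, I would construct a graph $G'$ so that $L(G')$ has a $d$-cut if and only if $G$ has a $d$-cut, equivalently (via Observation~\ref{o-cut-colouring} and the edge-colouring reformulation) so that $G'$ has a red-blue edge $d$-colouring if and only if $G$ has a red-blue $d$-colouring. The natural idea is to turn a vertex partition of $G$ into an edge partition of $G'$: we want the edges incident to a ``blue'' vertex of $G$ to become blue edges of $G'$ and those incident to a ``red'' vertex to become red edges. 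The obstruction is that an edge $uv$ of $G$ with $u$ blue and $v$ red has no consistent colour; the standard fix is to subdivide every edge of $G$, so that the original edge $uv$ becomes a path $u\text{-}w_{uv}\text{-}v$, and the two new edges can take the colours of their respective endpoints. So the first attempt is $G' = $ the graph obtained from $G$ by subdividing each edge once, possibly several times.

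The key point to verify is the constraint translation. In $L(G')$ a vertex corresponds to an edge of $G'$; two such vertices are adjacent iff the edges share an endpoint. A red-blue edge $d$-colouring of $G'$ requires that each edge $e$ of $G'$ be incident to at most $d$ edges of the opposite colour at $e$'s two endpoints combined. To force the colouring of $G'$ to come from a vertex $d$-colouring of $G$, I would attach, at each original vertex $v$ of $G$, a large monochromatic-forcing gadget — for instance a clique or a bundle of many parallel length-two paths — of size chosen so that all edges at $v$ inside the gadget, and hence all original edges at $v$, must receive one common colour; call it the colour of $v$. Then a subdivision vertex $w_{uv}$ sits on a path whose first edge has colour of $u$ and last edge has colour of $v$. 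Counting opposite-coloured incident edges at the endpoint $v$: the edges at $v$ of colour opposite to $v$ are exactly the subdivided ``half-edges'' coming from neighbours $u$ of $v$ with $u$'s colour $\neq v$'s colour — this count is precisely the number of oppositely-coloured neighbours of $v$ in $G$, which must be $\le d$. That is exactly the $d$-cut condition on $G$. One has to check the constraints at the subdivision vertices themselves and inside the gadgets don't impose anything extra; choosing the subdivision length (probably $2$, turning $uv$ into $u\text{-}w_1\text{-}w_2\text{-}v$) and gadget sizes carefully makes the middle edges harmless because each sees at most two coloured edges, which is $\le d$ since $d\geq 3$.

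The main obstacle, and the reason the theorem needs $d\geq 3$ rather than $d\geq 2$, is exactly this bookkeeping at the auxiliary (subdivision and gadget) vertices: a subdivision vertex or a gadget-junction vertex may be incident to a small bounded number of edges of each colour, and this bounded number must not exceed $d$. With a single subdivision this stray count can be as large as $2$ at a subdivision vertex (one half-edge coloured like $u$, one like $v$, plus whatever the gadget contributes), and with the monochromatic clique gadget the junction edges can see a couple of opposite edges; summing these bounded contributions pushes the requirement up to $d\geq 3$. So I would: (1) define $G'$ with each edge of $G$ subdivided (once or twice) and a monochromatic-forcing gadget glued at each vertex of $G$; (2) prove that every red-blue edge $d$-colouring of $G'$ assigns a single colour to all original edges at each $v\in V(G)$, inducing a red-blue colouring of $G$, and that this colouring is a red-blue $d$-colouring of $G$ because the opposite-colour-degree of $v$ in $G'$ equals that in $G$ plus a constant $\le d - \max_v(\text{neighbour bound})$ — here is where $d\geq 3$ is used; (3) conversely, turn any red-blue $d$-colouring of $G$ into a red-blue edge $d$-colouring of $G'$ by colouring each half-edge and gadget edge by its associated vertex colour and checking all degree constraints; (4) note the construction has polynomial size and $L(G')$ is a line graph, giving the reduction. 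Membership in \NP\ is already noted, so this establishes \NP-completeness.
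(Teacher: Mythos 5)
There is a genuine gap, and it sits exactly at the step you call ``the key point to verify'': the constraint of $G$ is not transferred to $G'$ at all. Your forcing gadget (a large clique glued at each original vertex $v$) does what you say it does: in any red-blue edge $d$-colouring of $G'$, \emph{every} edge incident to $v$ -- gadget edges and half-edges alike -- is forced to one common colour, the colour of $v$. But then your counting step is inconsistent with this: you claim the edges at $v$ of the opposite colour are the half-edges coming from oppositely coloured neighbours $u$, whereas under the forcing these half-edges incident to $v$ are coloured with $v$'s colour, not $u$'s. The edge carrying $u$'s colour is the other half, $uw_{uv}$, which is incident to $u$ and to the subdivision vertex $w_{uv}$, never to $v$. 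Consequently the opposite-colour count at $v$ is $0$, and the only places where differently coloured edges meet are the subdivision vertices, which have degree $2$ (or the middle edge of a twice-subdivided edge, which meets only two edges). So every partition of $V(G)$ into two nonempty colour classes -- whether or not it is a $d$-cut -- induces a valid red-blue edge $d$-colouring of $G'$ for every $d\geq 1$: each edge sees at most one or two oppositely coloured adjacent edges. Your construction therefore maps every instance to a yes-instance, and the backward direction of the claimed equivalence fails; no choice of subdivision length or gadget size fixes this, because there is an inherent tension: a gadget strong enough to force all edges at $v$ monochromatic erases the degree constraint at $v$, while a weaker gadget no longer yields a well-defined vertex colour. (Your explanation of why $d\geq 3$ is needed is likewise an artefact of this miscount.)

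The paper avoids this trap by not reducing from \dcut{} itself. It reduces from {\sc Not-All-Equal Satisfiability} with all-positive clauses and builds a graph $G$ whose red-blue \emph{edge} $d$-colourings are analysed directly: large cliques ($S$, $\overline{S}$, $V_x$, $V_{\overline{x}}$, each of size at least $2d+2$) are forced monochromatic, as in your gadget, but the variable vertices $v_x$ and clause vertices $v_c$ are attached with carefully tuned numbers of edges to these cliques (for instance $v_x$ has $2d-2$ edges into $V_x\cup V_{\overline{x}}$ plus one edge to each of $S$ and $\overline{S}$, and $v_c$ has $d-2$ edges to each of $S$ and $\overline{S}$ plus three edges to literal cliques), so that the ``at most $d$ oppositely coloured adjacent edges'' budget at these low-degree junction vertices is exactly what forces $V_x$ and $V_{\overline{x}}$ to take complementary colours and forces each clause's three literal cliques not to be all alike. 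That budget arithmetic at junction vertices is the mechanism your reduction is missing, and it is also where the hypothesis $d\geq 3$ genuinely enters (the construction uses $d-2\geq 1$ auxiliary attachment vertices per clause).
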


\begin{proof}
We first define the known \NP-complete problem we reduce from. Let $X= \{x_1,x_2,...,x_n\}$ be a set of logical  variables and  ${\cal C} = \{C_1, C_2, . . . , C_m\}$ be a set of clauses over $X$. The problem  {\sc Not-All-Equal Satisfiability} asks whether $(X,{\cal C})$ has a {\it satisfying not-all-equal} truth assignment $\phi$ that is, $\phi$ sets at least one literal true and at least one literal false in each $C_i$. This problem remains \NP-complete even if each clause consists of three distinct literals that are all positive~\cite{Sc78}.
Let $(X,{\mathcal C})$ be such an instance, where $X = \{ x_1, x_2, \dots, x_n \}$ and ${\mathcal C} = \{C_1, C_2, \dots, C_m\}$.
We construct, in polynomial time, a graph~$G$; see also Figure~\ref{fig:line}:

\begin{itemize}
\item Build a clique $S=\{v^S_{x_1},\ldots,v^S_{x_n}\} \cup \{v_1^{c_1},\ldots,v_{d-2}^{c_1}\}\cup \cdots \cup \{v_1^{c_m},\ldots,v_{d-2}^{c_m}\}$.\\[-7pt]
\item Build a clique $\overline{S}=\{v^{\overline{S}}_{x_1},\ldots,v^{\overline{S}}_{x_n}\} \cup \{u_1^{c_1},\ldots,u_{d-2}^{c_1}\}\cup \cdots \cup \{u_1^{c_m},\ldots,u_{d-2}^{c_m}\}$.\\[-7pt]
\item For every $x \in X$, add cliques $V_x=\{v_1^x, \dots, v_{d-1}^x\}$ and $V_{\overline{x}}=\{v_1^{\overline{x}}, \dots, v_{d-1}^{\overline{x}}\}$.\\[-7pt]
\item For every $x \in X$, add a vertex $v_x$ with edges $v_xv_x^S$, $v_xv_x^{\overline{S}}$, $v_xv_1^x, \dots, v_xv_{d-1}^x$, $v_xv_1^{\overline{x}},\ldots, v_xv_{d-1}^{\overline{x}}$.\\[-7pt]
\item For every $C\in {\cal C}$, add a clause vertex $v_c$ with edges $v_cv_1^c, \dots,  v_c v_{d-2}^c$, $v_c u_1^c, \dots, \linebreak v_c u_{d-2}^c$,
 and if $C=\{x_i,x_j,x_k\}$, also
 add a vertex  $v_c^{x_i}$ to $V_{x_i}$, a vertex $v_c^{x_j}$ to $V_{x_j}$ and a vertex $v_c^{x_k}$ to $V_{x_k}$, and add the edges $v_c v_c^{x_i}$, $v_c v_c^{x_j}$, $v_c v_c^{x_k}$.\\[-7pt]
\item Add, if needed, some auxiliary vertices to $S, \overline{S}, V_{x_1}, \ldots, V_{x_n}, 
V_{\overline{x_1}}, \dots, V_{\overline{x_n}}$, such that in the end all these sets are cliques of size at least~$2d+2$.
\end{itemize}

\begin{figure}[t]
\centering
\includegraphics[width = \textwidth]{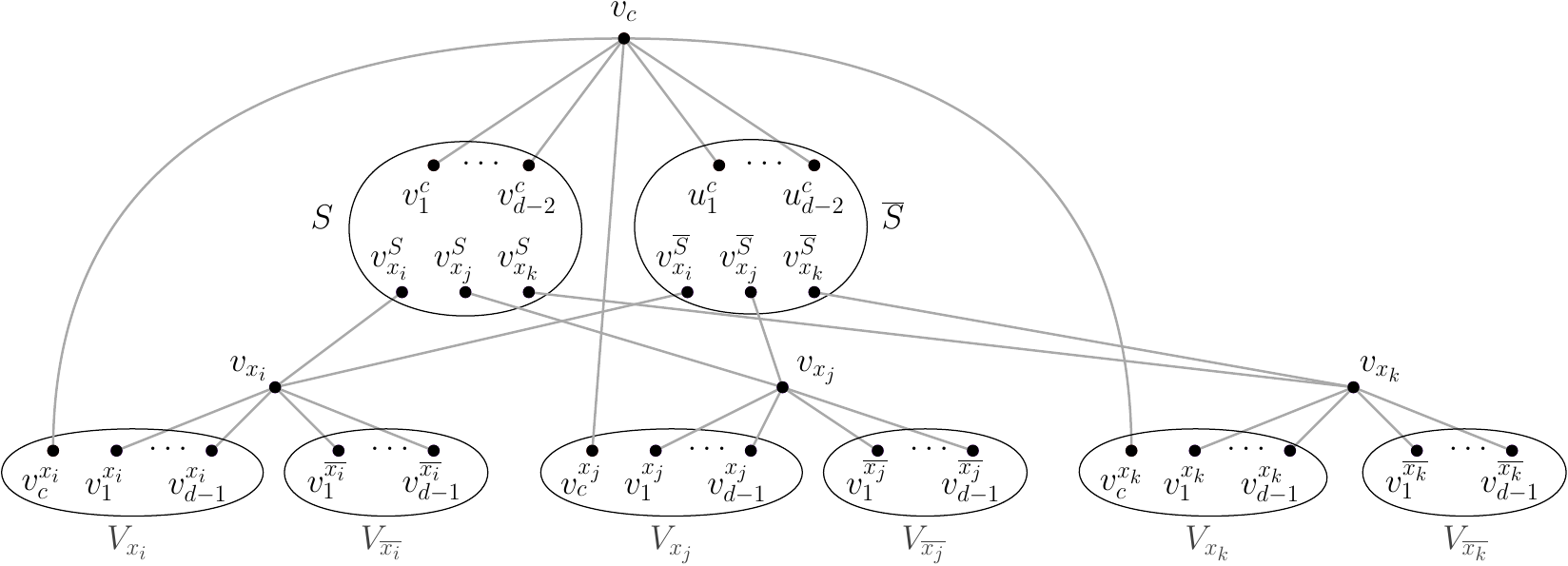}
\caption{An example of vertices in the reduction related to clause $C = \{x_i,x_j,x_k\}$.}\label{fig:line}
\end{figure}

\noindent
We claim that $(X,{\mathcal C})$ has a satisfying not-all-equal truth assignment if and only if
the line graph
 $L(G)$ has a $d$-cut.
 Recall that, by Observation~\ref{o-cut-colouring}, $L(G)$ has a $d$-cut if and only if $L(G)$ has a red-blue $d$-colouring. 
 Furthermore,
 $L(G)$ has a red-blue $d$-colouring if and only if $G$ has a red-blue edge $d$-colouring.
 Hence, we will show that $(X,{\mathcal C})$ has a satisfying not-all-equal truth assignment if and only if $G$ has a red-blue edge $d$-colouring.
 
 First suppose $(X,{\cal C})$ has a satisfying not all-equal truth assignment.
We colour all edges in $S$ red and in $\overline{S}$ blue.
For every $x \in X$ set to true, we colour the edges in  $V_x$ red and those in $V_{\overline{x}}$ blue. 
For every $x \in X$ set to false, we colour the edges in $V_x$ blue and those in $V_{\overline{x}}$ red. 
Consider an edge $uv$, with $v \in \{v_x, v_c\; |\; x \in X, c\in C\}$. Then $u$ is contained in a clique $D \in \{S, \overline{S}, V_{x_1}, V_{\overline{x_1}}, \dots, V_{x_n}, V_{\overline{x_n}}\}$.
Colour $uv$ with the same colour as the edges of~$D$.

Now, let $D \in \{S, \overline{S}, V_{x_1}, V_{\overline{x_1}}, \dots, V_{x_n}, V_{\overline{x_n}}\}$.
Every $uu' \in E(D)$ is adjacent to only edges of the same colour.
For $u \in V(D)$ and $v \in \{v_x, v_c\; |\; x \in X, c\in C\}$, the edge~$uv$ has the same colour as all edges in $D$. Since $S$ and $\overline{S}$ have different colours and $V_x$ and $V_{\overline{x}}$ have different colours for every $x \in X$, $uv$ has at most $d$~adjacent edges of each colour.
Hence, we obtained a red-blue edge $d$-colouring of $G$.
 
\medskip
\noindent
 Now suppose that $G$ has a red-blue edge $d$-colouring. 
We prove a series of claims:

\begin{nclaim}\label{cl:line-unicolor}
Every clique $D \in \{S, \overline{S}, V_{x_1}, V_{\overline{x_1}}, \dots, V_{x_n}, V_{\overline{x_n}}\}$ is monochromatic. 
\end{nclaim}

\begin{claimproof}
First assume $G[D]$ has a red edge $uv$ and a blue edge $uw$.
As $|D| \geq 2d+2$, we know that $u$ is incident to at least $2d+1$ edges. Hence, we may assume without loss of generality that $u$ is incident to at least $d+1$ red edges. However, now the blue edge $uw$ is adjacent to $d+1$ red edges, a contradiction.
As every $u\in D$ is incident to only edges of the same colour and $D$ is a clique, it follows that $D$ is monochromatic.
\end{claimproof}

\medskip
\noindent
By Claim~\ref{cl:line-unicolor}, we can speak about the {\it colour} (either red or blue) of a clique $D$ if $D$ belongs to $\{S, \overline{S}, V_{x_1}, V_{\overline{x_1}}, \dots, V_{x_n}, V_{\overline{x_n}}\}$.

\begin{nclaim}\label{cl:line-edge}
For each $ x \in X$ and $c \in C$, each edge from $v_x$ or $v_c$ to a vertex~in a clique $D \in \{S, \overline{S}, V_{x_1}, V_{\overline{x_1}}, \dots, V_{x_n}, V_{\overline{x_n}}\}$ has the same colour as~$D$.
\end{nclaim}

\begin{claimproof}
This follows directly from the fact that $|D|\geq 2d+1$.
\end{claimproof}

\begin{nclaim}\label{cl:line-bicolor}
The cliques $S$ and $\overline{S}$ have different colours if and only if  for every variable $x \in X$, it holds that $V_x$ and $V_{\overline{x}}$ have different colours.
\end{nclaim}

\begin{claimproof}
First suppose $S$ and $\overline{S}$ have different colours, say $S$ is red and $\overline{S}$ is blue. For a contradiction, assume there exists a variable $x \in X$, such that $V_x$ and $V_{\overline{x}}$ have the same colour, say blue. By Claim~\ref{cl:line-edge}, we have that the $2d-2$ edges between $v_x$ and $V_x\cup V_{\overline{x}}$ and the edge $v_xv^{\overline{S}}_x$ are all blue, while $v_xv^S_x$ is red. Hence, the red edge $v_xv^S_x$ is adjacent to at least $2d-1\geq d+1$ blue edges, a contradiction.  

Now suppose that for all $x \in X$, $V_x$ and $V_{\overline{x}}$ have different colours, say $V_x$ is red and $V_{\overline{x}}$ is blue.
 For a contradiction, assume that $S$ and $\overline{S}$ have the same colour, say blue. Let $x\in X$.
 By Claim~\ref{cl:line-edge}, we have that the edges between $v_x$ and $V_x$ are red, while all other edges incident to $v_x$ are blue. Now every (red)
 edge between $v_x$ and $V_x$ is incident to $d+1$ blue edges, a contradiction.
\end{claimproof}

\begin{nclaim}\label{cl:line-bicolorS}
The cliques $S$ and $\overline{S}$ have different colours.
\end{nclaim}

\begin{claimproof}
For a contradiction, assume $S$ and $\overline{S}$ have the same colour, say blue. By Claim~\ref{cl:line-edge}, we have that $v_xv^S_x$ is blue.
By Claim~\ref{cl:line-bicolor}, we find that for every $x\in X$, $V_x$ and $V_{\overline{x}}$ have the same colour. If $V_x$ and $V_{\overline{x}}$ are both red,
then the $2d-2$ edges between $v_x$ and $V_x\cup V_{\overline{x}}$ are red due to  Claim~\ref{cl:line-edge}. Consequently, the blue edge $v_xv^S_x$ is adjacent to $2d-2\geq d+1$ red edges. This is not possible. Hence,  $V_x$ and $V_{\overline{x}}$ are blue, and by Claim~\ref{cl:line-edge}, all edges between $v_x$ and $V_x\cup V_{\overline{x}}$ are blue as well. This means that every edge of $G$ is blue, a contradiction.
\end{claimproof}

\begin{nclaim}\label{cl:line-vx}
For every clause $C= \{x_i,x_j,x_k\}$ in ${\cal C}$, the cliques $V_{x_i}$, $V_{x_j}$ and $V_{x_k}$ do not all have the same colour. 
\end{nclaim}

\begin{claimproof}
For a contradiction, assume $V_{x_i}$, $V_{x_j}$ and $V_{x_k}$ have the same colour, say blue.
By Claim~\ref{cl:line-bicolorS}, $S$ and $\overline{S}$ are  coloured differently, say $S$ is red and $\overline{S}$ is blue.
 By Claim~\ref{cl:line-edge}, we have that the three edges $v_cv_c^{x_i}$, $v_cv_c^{x_j}$ and $v_cv_c^{x_k}$ are all blue, just like the $d-2$ edges between $v_c$ and $\overline{S}$, while every edge between $v_c$ and $S$ is red. Consider such a red edge~$e$. We find that $e$ is adjacent to $d+1$ blue edges,
a contradiction.
\end{claimproof}

\medskip
\noindent
For each variable $x$, if the clique $V_x$ is coloured red, then set~$x$ to true, and else to false.
By Claim~\ref{cl:line-vx}, this yields a satisfying not-all-equal truth assignment.
\qed
\end{proof}

\noindent
We now show that the case $H=3P_2$ is hard. The gadget in our \NP-hardness reduction is neither $2P_4$-free nor $P_6$-free nor $P_7$-free.

\begin{theorem}\label{t-3p2new}
For every $d\geq 2$, the \dcut{} problem is \NP-complete for $3P_2$-free graphs of radius~$2$ and diameter~$3$.
\end{theorem}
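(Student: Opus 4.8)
The plan is to reduce from \nae{} restricted to instances in which every clause consists of three distinct positive literals (this restriction is \NP-complete by~\cite{Sc78}, the same source used for Theorem~\ref{t-line}); as a red-blue $d$-colouring can be verified in polynomial time, membership in \NP{} is immediate and it suffices to give a polynomial-time reduction. Given an instance $(X,{\cal C})$, I would build a graph $G$ around two \emph{large cliques} $A$ and $B$, each of size at least $2d+1$ and of polynomial total size; by the remark following Observation~\ref{o-cut-colouring} both are monochromatic in every red-blue $d$-colouring, and the construction is set up so that they are forced to receive \emph{opposite} colours — say $A$ red, $B$ blue — so that $A$ and $B$ represent the two sides of the cut. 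Since $d$ is constant, all remaining gadgets can be made constant-size: for each variable $x$, a \emph{variable gadget} attached to $A$ and $B$ that, once $A$ is red and $B$ is blue, admits exactly two legal extensions, one meaning ``$x$ true'' and one ``$x$ false'', these choices being mutually independent; for each clause $C=\{x_i,x_j,x_k\}$, a \emph{clause gadget} containing a vertex forced blue and joined to the three variable gadgets plus exactly $d-2$ further red vertices — so that ``$x_i,x_j,x_k$ all true'' would give it $3+(d-2)=d+1$ red neighbours, a contradiction — and, symmetrically, a vertex forced red with $d-2$ further blue neighbours to rule out ``all false''; and finally a few extra vertices to make $G$ connected and to pin down its radius and diameter. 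The deficiencies $d-2$ are non-negative precisely because $d\ge 2$.

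The argument then has a structural part and a correctness part. Structurally: $G$ has diameter at most $3$ (any two vertices are joined through $A$, $B$, or a chosen hub vertex), $G$ has a vertex of eccentricity $2$ but no universal vertex, so $\radius(G)=2$, and one exhibits two vertices (a pendant on $A$ and a gadget vertex attached only to $B$) with no common neighbour, so $\diam(G)=3$. For $3P_2$-freeness, the point is that $A$ and $B$ are the only large parts of $G$ and every gadget is tethered to $A\cup B$; from this one argues that $G$ contains no induced $3P_2$. (One also verifies that the clause and connectivity gadgets contain induced $2P_4$, $P_6$ and $P_7$, so this reduction does not settle the remaining open cases.) For correctness: from a red-blue $d$-colouring, $A$ and $B$ are monochromatic and, by the forcing built in, oppositely coloured; each variable gadget is then in one of its two legal states, giving a truth assignment, and the degree bound at each clause gadget is met exactly when that clause is not-all-equal satisfied. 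Conversely, a not-all-equal assignment yields a red-blue $d$-colouring by putting every gadget in the corresponding state and checking, gadget by gadget, that no vertex ends up with more than $d$ neighbours of the opposite colour.

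The main obstacle is the simultaneous realisability of everything. The $3P_2$-free condition is rigid — essentially everything outside two cliques must be constant-size and attached to those cliques — yet the reduction needs the variable gadgets to encode many independent binary choices and the clause gadgets to impose genuine constraints, all without introducing a universal vertex (which would drop the radius to $1$) or a shortcut (which would drop the diameter to $2$). The delicate point is the degree bookkeeping: the clique vertices are monochromatic and so have no slack for the ``wrong'' colour, while the gadget vertices sit exactly at their budget of $d$ opposite-coloured neighbours, so the adjacencies between gadgets and cliques — and how many clause gadgets a single clique vertex may see — must be arranged with care (for instance by giving each gadget private vertices inside $A$ and $B$, after first passing to a bounded-occurrence variant of \nae{} if needed). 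Making this bookkeeping close everywhere at once is the technical heart of the proof.
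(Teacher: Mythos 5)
Your proposal is a plan rather than a proof, and the part you defer --- ``making this bookkeeping close everywhere at once is the technical heart'' --- is exactly where it breaks down. Two concrete problems. First, $3P_2$-freeness is even more rigid than you acknowledge: if each variable gadget is a constant-size graph containing at least one edge disjoint from $A\cup B$, then any three pairwise non-adjacent variable gadgets already contain an induced $3P_2$, so for $n\geq 3$ variables your ``gadget with exactly two legal internal states'' cannot exist. In effect every edge of $G$ must meet $A\cup B$ (this is precisely how the paper gets $3P_2$-freeness), which collapses each variable gadget to an independent set --- in the paper, a single vertex $x_h$ whose colour \emph{is} its truth value; there is no room for internal forcing structure. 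Second, your clause gadget (a blue vertex adjacent to the three variable vertices plus $d-2$ private red vertices, and a symmetric red vertex) charges every variable vertex one red \emph{and} one blue clause-neighbour per occurrence, so a variable occurring in $k$ clauses has $k$ opposite-coloured neighbours whatever its colour; for $d=2$ this forces $k\leq 2$, and you have not established (nor is it clear) that \nae-3-SAT remains \NP-complete when every variable occurs in at most two clauses. Your parenthetical ``pass to a bounded-occurrence variant if needed'' is precisely the missing ingredient, not a routine fix.

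The paper resolves both issues by choosing a different source problem and a different placement of the clause vertices. It reduces from the Darmann--D\"ocker restriction of {\sc $3$-Satisfiability} (each variable occurs positively in exactly two clauses and negatively in exactly two others; each clause is all-positive or all-negative), builds two cliques $K$ (all-positive clause vertices plus a vertex $C$) and $K'$ (all-negative clause vertices plus $D$), an independent set $I$ of variable vertices, joins $x_h$ to exactly the clause vertices of the clauses it occurs in, and adds the edge $CD$. Then every edge meets $K\cup K'$ (giving $3P_2$-freeness), the $2{+}2$ occurrence pattern makes each variable vertex have exactly two neighbours in each clique --- matching the budget $d=2$ with no slack --- and a red clause vertex in $K$ with three variable neighbours can tolerate at most two blue ones, which is exactly ``at least one literal true''; monotonicity of the clauses is what lets a single clique-side vertex encode the clause constraint, whereas your not-all-equal encoding needs both a red and a blue enforcer per clause and thus overspends the variables' budgets. (For $d\geq 3$ the paper pads the cliques and adds cross edges so that the tight counts are restored, somewhat as you suggest with private vertices.) So the overall shape of your plan --- two big oppositely coloured monochromatic cliques plus degree-budget arithmetic --- matches the paper, but the gadgets you would need do not survive the $3P_2$-freeness and occurrence-count constraints, and no construction meeting all of them simultaneously is exhibited.
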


\begin{proof}
We first define the known \NP-complete problem we reduce from.
Let $X= \{x_1,x_2,\cdots,x_n\}$ be a set of variables. Let ${\cal C} = \{C_1, C_2, \cdots, C_m\}$ be a set of clauses over $X$. The {\sc $3$-Satisfiability} problem asks whether $(X,{\cal C})$ has a
{\it satisfying} truth assignment~$\phi$, that is, $\phi$ sets at least one literal true in each $C_i$.
Darmann and D\"ocker~\cite{DD21} proved that {\sc $3$-Satisfiability} is \NP-complete even for instances in which:
\begin{enumerate}
\item each variable occurs as a positive literal in exactly two clauses and as a negative literal in exactly two other clauses, and
\item each clause consists of three distinct literals that are either all positive or all negative.
\end{enumerate}
Let $X= \{x_1,x_2,\dots,x_n\}$ for some $n\geq 1$ and ${\cal C} = \{C_1, \dots ,C_p,D_1,\ldots,  D_q\}$ where each $C_j$ consists of three distinct positive literals, and each $D_j$ consists of three distinct negative literals. We may assume without loss of generality that $p\geq 4$ and $q\geq 4$, as otherwise the problem is trivial to solve by using brute force.

From $(X,{\cal C})$, we construct a graph $G=(V,E)$ as follows. We introduce two vertices $C$ and $D$ and let the other vertices of $G$ represent either variables or clauses. That is,
we introduce a clique $K=\{C_1,\ldots,C_p,C\}$; a clique $K'=\{D_1,\ldots,D_q,D\}$ and an independent set $I=\{x_1,\ldots,x_n\}$, such that $K$, $K'$, $I$ are pairwise disjoint and $V=K\cup K'\cup I$. For every $h\in \{1,\ldots,n\}$ and every $i\in \{1,\ldots,p\}$, we add an edge between $x_h$ and $C_i$ if and only if $x_h$ occurs as a literal in $C_i$. 
For every $h\in \{1,\ldots,n\}$ and every $j\in \{1,\ldots,q\}$, we add an edge between $x_h$ and $D_j$ if and only if $x_h$ occurs as a literal in $D_j$. We also add the edge $CD$.
See Figure~\ref{f-3p2} for an example. 

\begin{figure}
\centering
\begin{tikzpicture}

\node[rvertex, label = left: $C$](C) at (3.5, 4){};
\node[bvertex, label = right: $D$] (D) at (6.5, 4){};
\draw[edge] (C) -- (D);

\foreach \i  in {1,...,4}{
	\node[rvertex, label = above: $C_\i$](C\i) at (\i, 2.5){};
}

\begin{scope}[shift = {(5,0)}]
\foreach \i  in {1,...,4}{
	\node[bvertex, label = above: $D_\i$](D\i) at (\i, 2.5){};
}\end{scope}

\node[rvertex, label = below: $x_1$](x1) at (2.5, 0){};
\foreach \i  in {2,...,5}{
	\node[bvertex, label = below: $x_\i$](x\i) at (\i+1.5, 0){};
}
\node[rvertex, label = below: $x_6$](x6) at (7.5, 0){};

\draw[edge](C1) -- (x1);
\draw[edge](C1) -- (x2);
\draw[edge](C1) -- (x3);

\draw[edge](C2) -- (x1);
\draw[edge](C2) -- (x3);
\draw[edge](C2) -- (x4);

\draw[edge](C3) -- (x2);
\draw[edge](C3) -- (x5);
\draw[edge](C3) -- (x6);

\draw[edge](C4) -- (x4);
\draw[edge](C4) -- (x5);
\draw[edge](C4) -- (x6);

\draw[edge](D1) -- (x1);
\draw[edge](D1) -- (x2);
\draw[edge](D1) -- (x4);

\draw[edge](D2) -- (x1);
\draw[edge](D2) -- (x3);
\draw[edge](D2) -- (x5);

\draw[edge](D3) -- (x2);
\draw[edge](D3) -- (x4);
\draw[edge](D3) -- (x6);

\draw[edge](D4) -- (x3);
\draw[edge](D4) -- (x5);
\draw[edge](D4) -- (x6);

\draw[dashed] (2,-0.5) rectangle (8,0.3);
\draw[dashed] (0.5,2.2) rectangle (4.5,4.3);
\draw[dashed] (5.5,2.2) rectangle (9.5,4.3);
\node[](k) at ( 0,3.25) {$K$};
\node[](k) at ( 10,3.25) {$K'$};
\node[](k) at ( 1.5,0) {$I$};

\end{tikzpicture}
\caption{The graph $G$ for 
$X = 
\{x_1, \dots, x_6\}$ and $\mathcal{C} = \{ \{ x_1, x_2, x_3\},\{x_1, x_3, x_4 \},$ $\{x_2, x_5, x_6 \},\{x_4, x_5, x_6 \}, 
 \{\overline{x_1}, \overline{x_2},  \overline{x_4}\},\{\overline{x_1}, \overline{x_3},  \overline{x_5}\},  \{\overline{x_2}, \overline{x_4},  \overline{x_6}\},  \{\overline{x_3}, \overline{x_5},  \overline{x_6}\}\}$.
  For readability the edges inside the cliques $K$ and $K'$ are not shown. }\label{f-3p2}
\end{figure}

As every edge must have at least one end-vertex in $K$ or $K'$, and $K$ and $K'$ are cliques, we find that $G$ is $3P_2$-free. Moreover, $G$ has radius~$2$, as the distance from $C$ or $D$ to any other vertex in $G$ is at most~$2$. In addition, the distance from a vertex in $I\cup (K\setminus \{C\}) \cup (K'\setminus \{D\})$ to any other vertex in $G$ is at most~$3$. Hence, $G$ has diameter at most~$3$.

We claim that $(X,{\cal C})$ has a satisfying truth assignment if and only if $G$ has a $2$-cut.

First suppose that $(X,{\cal C})$ has a satisfying truth assignment~$\phi$.
In $I$, we colour for 
$h\in \{1,\ldots,n\}$, vertex $x_h$ red if $\phi$ sets $x_h$ to be true and blue if $\phi$ sets $x_h$ to be false. We colour all the vertices in $K$ red and the vertices in $K'$ blue. 

Consider a vertex $x_h$ in $I$. First suppose that $x_h$ is coloured red. 
As each literal appears in exactly two clauses from $\{D_1,\ldots, D_q\}$, we find that $x_h$ has only two blue neighbours (which all belong to $K'$). 
Now suppose that $x_h$ is coloured blue. As each literal appears in exactly two clauses from $\{C_1,\ldots, C_p\}$, we find that $x_h$ has only two red neighbours (which all belong to $K$). 
Now consider a vertex~$C_i$ in $K$, which is coloured red. As $C_i$ consists of three distinct positive literals and $\phi$ sets at least one positive literal of $C_i$ to be true, we find that $C_i$ is adjacent to at most two blue vertices in $I$.
Hence, every $C_i$ is adjacent to at most two blue vertices.
Now consider a vertex~$D_j$ in $K'$, which is coloured blue. As $D_j$ consists of three distinct negative literals and $\phi$ sets at least one negative literal of $D_j$ to be
 true,
 we find that $D_j$ is adjacent to at most two red vertices in $I$. Hence, every $D_j$ is adjacent to at most two red vertices. Finally, we note that $C$, which is coloured red, is adjacent to exactly one blue neighbour, namely $D$, while $D$ has only one red neighbour, namely $C$.
The above means that we obtained a red-blue $2$-colouring of $G$. By Observation~\ref{o-cut-colouring}, this means that $G$ has a $2$-cut.

Now suppose $G$ has a $2$-cut. By Observation~\ref{o-cut-colouring}, this means that $G$ has a red-blue $2$-colouring $c$. As $|K|=p+1\geq 5$ and $|K'|=q+1\geq 5$,  both $K$ and $K'$ are monochromatic. Say $c$ colours every vertex of $K$ red. For a contradiction, assume $c$ colours every vertex of $K'$ red as well. As $c$ must colour at least one vertex of $G$ blue, 
this means that $I$ contains a blue vertex $x_h$.  
As each variable occurs as a positive literal in exactly two clauses and as a negative literal in exactly two other clauses,
we now find that a blue vertex, $x_i$, has two red neighbours in $K$ and two red neighbours in $K'$, so four red neighbours in total, a contradiction. We conclude that $c$ must colour every vertex of $K'$ blue.

Recall that every $C_i$ and every $D_j$ consists of three literals. Hence, every vertex in $K\cup K'$ has three neighbours in $I$. As every vertex $C_i$ in $K$ is red, this means that at least one neighbour of $C_i$ in $I$ must be red.
As every vertex $D_j$ in $K'$ is blue, this means that at least one neighbour of $D_j$ in $I$ must be blue.
Hence, setting $x_i$ to true if $x_i$ is red in $G$ and to false if $x_i$ is blue in~$G$ gives us the desired truth assignment for $X$. 

\medskip
\noindent
Now, we consider the case where $d\geq 3$. We adjust $G$ as follows. We first modify $K$ into a larger clique by adding for each $x_h$, a set $L_h$ of $d-3$ new vertices. We also modify $K'$ into a larger clique by adding for each $x_h$, a set $L'_h$ of $d-3$ vertices. For each $h\in \{1,\ldots,n\}$ we make $x_h$ complete to both $L_h$ and to $L_h'$.
Finally, we add additional edges between vertices in $K\cup L_1\cup \ldots \cup L_n$ and vertices in $K'\cup L_1'\cup \ldots \cup L_n'$, in such a way that every vertex in $K\setminus \{C\}$ has $d-2$ neighbours in $K'\setminus \{D\}$, and vice versa. 
The modified graph $G$ is still $3P_2$-free, has radius~$2$ and diameter~$3$, and also still has size polynomial in $m$ and $n$.
The remainder of the proof uses the same arguments as before. \qed
\end{proof}

\section{Conclusions}\label{s-con}

We considered the natural generalization of \mc{} to \dcut~\cite{GS21} and proved dichotomies for graphs of bounded diameter and graphs of bounded radius.
We also started a systematic study on the complexity of \dcut{} for $H$-free graphs. 
While for $d=1$, there still exists an infinite number of non-equivalent open cases, we were able to obtain for every $d\geq 2$,
 an almost-complete complexity classification of \dcut{} for $H$-free graphs, with only three non-equivalent open cases left 
 if $d\geq 3$. 
 We finish our paper with some open problems on $H$-free graphs resulting from our systematic study.

We recall that {\sc $1$-Cut} is polynomial-time solvable for claw-free graphs~\cite{Bo09}, while we showed that \dcut{} is \NP-complete even for line graphs if $d\geq 3$. 
We also recall the recent result of Ahn et al.~\cite{AELPS25} who proved that $2$-{\sc Cut} is \NP-complete for claw-free graphs.
What is the computational complexity of $2$-{\sc Cut} for line graphs?

Finally, we recall the only three non-equivalent open cases  $H=2P_4$, $H= P_6$, $H=P_7$ for \dcut{} on $H$-free graphs for $d\geq 2$.
We aim to address these cases as future work.

\medskip
\noindent
{\bf Acknowledgments.} We thank Carl Feghali and \'Edouard Bonnet for fruitful discussions.

\bibliographystyle{splncs04}
\bibliography{ref.bib}

\end{document}